\documentclass{amsart}

\usepackage{graphicx}%
\usepackage{amsmath,amssymb,amsfonts}%
\usepackage{amsthm}%
\usepackage{physics}
\usepackage{enumitem}
\usepackage{array}

%For tracking updates:
% \usepackage[normalem]{ulem}
\usepackage[colorlinks=true,allcolors=blue]{hyperref}

\newcommand{\Pstab}{\mathcal{P}^{\textup{stab}}}
\newcommand{\Punst}{\mathcal{P}^{\textup{unst}}}
\newcommand{\Pbif}{\mathcal{P}^{\textup{bif}}}

\newcommand{\inner}[2]{\langle #1, #2\rangle}

\newcommand{\phitil}{\widetilde{\phi}}

\newcommand{\ftil}{\widetilde{f}}
\newcommand{\Xtil}{\widetilde{X}}
\newcommand{\Wunst}{W^{\textup{u}}}
\newcommand{\Wstab}{W^{\textup{s}}}

\newcommand{\IA}{I_{\mathcal{P}_A}}

\DeclareMathOperator{\sgn}{sgn}

\DeclareMathOperator{\enc}{enc}

\DeclareMathOperator{\dist}{dist}

\newtheorem{lemma}{Lemma}

\newtheorem{definition}{Definition}

\newtheorem{remark}{Remark}

%% Custom labelling for Definitions, Theorems etc.

\newenvironment{manualtheorem}[1]{%
  \manualtheoreminner
}{\endmanualtheoreminner}

\begin{document}

\title{Controllability in a class of cancer therapy models with co-evolving resistance} 

\author{Frederik J. Thomsen}
\address{Delft Institute of Applied Mathematics, Delft University of Technology, Mekelweg 4, Delft 2628DC, Netherlands}
\email{f.j.thomsen@tudelft.nl}

\author{Johan L. A. Dubbeldam}
\address{Delft Institute of Applied Mathematics, Delft University of Technology, Mekelweg 4, Delft 2628DC, Netherlands}
\email{j.l.a.dubbeldam@tudelft.nl}

\begin{abstract}
Adaptive therapy is a recent paradigm in cancer treatment aiming at indefinite, safe containment of the disease when cure is judged unattainable. In modeling this approach, inherent limitations arise due to the structure of the vector fields and the bounds imposed by toxic side-effects of the drug. In this work we analyze these limitations in a minimal class of models describing a cancer population with a slowly co-evolving drug resistance trait. Chemotherapeutic treatment is introduced as any bounded time-varying input, forcing the cells to adapt to a changing environment. We leverage the affine structure and low dimension of the system to explicitly construct controllable subsets of the state space enclosing sets of equilibria. We show that these controllable sets entirely determine the asymptotic behavior of all trajectories that cannot lead to a cure.

\end{abstract}

\keywords{reachability and controllability, planar differential inclusions, adaptive dynamics}

\subjclass[2020]{37N25, 34A06, 93B05, 93B03}

\maketitle

\section{Introduction}\label{sec:intro}

Drug resistance remains a major obstacle in the design of treatment strategies for metastatic cancers \cite{gatenby2020}.
Chemotherapeutic agents particularly effective in the eradication of cancer cells often come with similarly devastating side-effects to healthy tissues.
Aggressive treatment strategies at the maximum acceptable level of drug toxicity are often doomed to fail as the cancer cells evolve resistance quickly enough to avoid extinction -- so-called evolutionary rescue \cite{alexander2014}.
In response, mathematical modeling of cancer therapy has surged over the last decade \cite{brady2019}, with many alternative treatment strategies being proposed.
The lack of clinical data necessary to resolve the dynamics in higher dimensions has popularized the use of planar ordinary differential equations describing population densities or frequencies, using concepts from optimal control and evolutionary game theory (for the latter see e.g. \cite{wolfl2022, gluzman2020}).
To incorporate resistance, some of these approaches assume the cancer cell population to be separable into fully drug resistant and sensitive compartments \cite{viossat2021,carrere2017}, including transitions between compartments based on drug dosage levels \cite{alvarez2024,ledzewicz2006}.
In others, resistance evolves instead as a continuous trait in response to the treatment \cite{kareva2022,pressley2021}.
This latter approach has its formal basis in the so-called canonical equation of adaptive dynamics \cite{dieckmann1996,champagnat2002}, derived as a deterministic limit from a Markov jump process. Similar dynamics have been derived in the standard formulation of quantitative genetics \cite{lande1976}, fitness generating functions \cite{vincent2005book} and were conceptualized already in \cite{waddington1957}.
In all cases, the resulting trait-dynamics are gradient flows, likening the process of evolution to a hill climbing process in an adaptive landscape.

\begin{figure}[ht]
 \begin{center}
  \includegraphics[width=.8\linewidth]{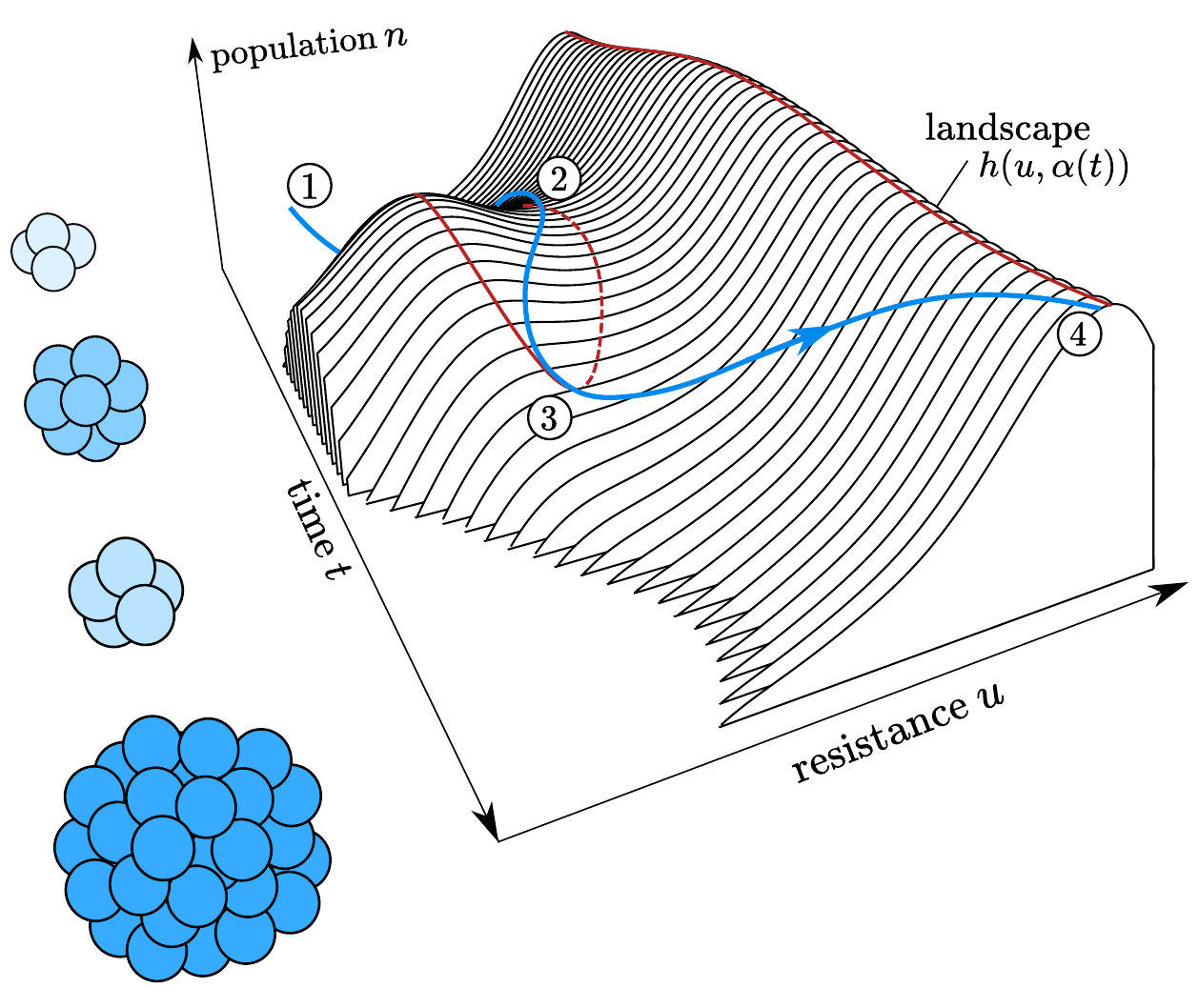}
  \caption{Sketch of the approach taken in this work. The landscape defined by a function $h(u,a)$ with ridges (solid red lines) and valleys (dotted red line) appearing and vanishing as the drug dosage of the chemotherapy is varied over time (cp. Waddington's classical picture \cite{ferrell2012}). A treatment protocol thus yields a sequence of landscapes with respect to which the cancer population tries to optimize its resistance. To do so, the population chases a moving (local) maximum of the landscape. In this example the treatment is decreased and then increased again. The resulting patient trajectory is sketched in blue. The cell population corresponding to the trajectory is sketched at four stages with the intensity of the blue color illustrating increasing and decreasing resistance. }
  \end{center}
  \label{fig:3d}
 \end{figure}

Although low-dimensional systems can reflect real-world applications with only limited accuracy, their relative simplicity allows for detailed analysis and the insights gained in recent years have helped pave the way for clinical trials \cite{zhang2017}. 
Preliminary results from these trials have led researchers to the conjecture that even dangerous metastatic cancers can be overcome by abandoning the goal of cure in favor of life-long containment of the disease \cite{west2023}.
In this case, the patient trajectory is to be confined within a ``safe'' subset of the state space while simultaneously optimizing a performance index (``quality of life'').
A simple example for a safe set and quality of life is placing an upper bound on the tumor volume and the cumulative drug dosage over time (see e.g. \cite{Carrere2019}).
A question unaddressed so far, is how such sets and measures can be defined in general. 
That is, how are candidate subsets for long-time, optimal containment limited by the structure of the vector fields used to model the disease and bounds imposed on the efficacy of the treatment?
Generally, this question can be approached using geometric control \cite{jurdjevic1997} and viability theory \cite{aubin1984}.
Explicitly finding the relevant subsets, however, is a very challenging task usually achieved only numerically, if at all (see e.g. \cite{szolnoki2003}).
For the special case of planar systems, the situation is more tractable, as both autonomous \cite{jones1969} and set-valued \cite{hautus1977} dynamical systems alike generically obey Poincar\'{e}-Bendixson type results.
Subject of this work is to demonstrate the limitations of containment in a minimal class of models with a simple bifurcation diagram.
The model consists of a cancer population and slowly co-evolving drug resistance trait under the effect of any bounded time-varying input.
It is a generalization of \cite{pressley2021,kareva2022} and incorporates a variety of ways in which resistance may alter the fitness of the cancer cells.
We explicitly construct controllable subsets from simple unions of orbits for autonomous vector fields, surrounding connected components of the set of equilibria. 
Within these sets, the trajectory can be adjusted as desired by the input.
The connection between controllability and sets of equilibria was studied previously by Nikitin \cite{nikitin1994} and Sun \cite{sun2007}.
We show that these sets entirely determine the long-time behavior of the system and any subset disjoint from controllable sets is escaped from in finite time.
Our methods are drawn from the theory of planar differential inclusions as summarized by Filippov \cite{Filippov1988}, and the global theory of control systems developed extensively by Colonius and Kliemann \cite{ColoniusKliemann2000}, simplified to the context of our model.

The paper is structured as follows: In Section~\ref{sec:model} we introduce our class of model and assumptions. In Section~\ref{sec:prelim} we determine the consequences of these assumptions on the dynamics. Section~\ref{sec:palliative} contains our main results, where we determine the influence of any time-varying treatment input on the dynamics for initial states from which it is impossible to cure. 
Finally, we summarize and conclude our results in Section~\ref{sec:conclusion}.

\section{\label{sec:model}Model}

We consider a logistic equation for the total number of cells $n$ comprising an underlying, more heterogeneous, population of cancerous cells with varying levels of resistance to a specific chemotherapy drug.
The expected level of drug resistance across the collection of cells is characterized by a dimensionless trait-value $u$.
The trait-value represents any of the complex molecular mechanisms negating the effect of the drug on the population \cite{holohan2013}.
The drug is modelled by the time-varying input $\alpha : [0,\infty) \rightarrow A$, taking values on the range $A \subset [0,\infty)$.
To account for its toxic side effects, when considering feasible inputs we restrict to a (large) bounded interval $A \subset [0,a_M]$, where $0$ corresponds to no treatment and $a_M$ represents the instantaneous maximum tolerable dose.
Exposure to the drug exerts a selective force on the cell population to adapt their resistance through mutation in order to maximize the per-capita growth rate in the given environment. 
According to classical evolutionary theory, the dynamics of the total cell population thus generate co-evolutionary dynamics of the expected trait value. At leading order, the selective force is given by the gradient of the growth rate \cite{dieckmann1996}. 
Our general model is the following planar system of nonautonomous ordinary differential equations:
\begin{align} \label{sys:full}
    \begin{cases}
     \dot{u} = \varepsilon k(n) \pdv{u} (b(u,\alpha(t)) - c(u) n) \\
     \dot{n} = n \big( b(u,\alpha(t)) - c(u) n \big),
    \end{cases}
\end{align}
under any discontinuous time-varying input $\alpha$ from the class
 \begin{equation*}
     \mathcal{A} = \{ \alpha: [0,\infty) \rightarrow A \mid \alpha \text{ piecewise constant}\}.
 \end{equation*}
In (\ref{sys:full}), the rate parameter $\varepsilon = \frac{\mu \sigma^2}{2}$ is given by the fraction of births resulting in a successful mutation $\mu \geq 0$, and the variance of mutation from the expected trait-value $\sigma^2$.
The trait dynamics are coupled to the population via the evolutionary rate function $k(n)$. 
It has previously been taken as either the identity \cite{dieckmann1996} or, in the cancer context, a constant \cite{pressley2021, Traulsen2023}. 
We instead assume here that it is $C^1$ over $\mathbb{R}$ and vanishes only when the population is extinct:
\begin{enumerate}[label=\textbf{H\arabic*.},ref=H\arabic*]
    \item \label{H-K} $k^{-1}(0)=\{0\}$ and $k'(0)>0$, 
\end{enumerate} 
where $'=\frac{\partial}{\partial u}$.
The extension of the rate function to negative population $n<0$ is arbitrary and purely for formal convenience later (see system~(\ref{sys:reduced})).
We consider a class of growth functions $b: \mathbb{R} \times A \rightarrow \mathbb{R}$ that are $C^\infty$ in $(u,a)$ under additional assumptions formulated in the following:
The most common hypothesis for chemotherapy is that the administered drug eliminates a certain proportion of cells, independently of the total population size (\textit{log-kill hypothesis}, \cite{norton1986}). This yields the following affine form with respect to the treatment effect:

\begin{enumerate}[resume,label=\textbf{H\arabic*.},ref=H\arabic*]
    \item \label{H-logkill}  $b(u,a) = b_0(u) - a b_1(u)$, where $b_1(u) > 0$ and $b'_1(u)<0$. 
\end{enumerate}
The function $b_1$ defines how the efficacy of the treatment is mitigated by the trait.
For the log-kill hypothesis a treatment $a>0$ always negatively impacts the cell growth.
Drug efficacy strictly decreases with increasing resistance. While populations at any value of resistance are effected to some degree, lower trait-values are targeted more strongly.
If taken as a step-function, vanishing beyond some threshold value in $u_c$, $b_1$ would instead define two compartments, one of sensitive ($u < u_c$) and one of completely resistant ($u \ge u_c$) cells with no co-evolution.
The function $b_0$ describes the trade-off between resistance and intrinsic growth rate of the cells.
The details of how this trade-off occurs are poorly understood and may be strongly patient specific.
We consider here the general setting of arbitrary $b_0$ and discuss in our examples several different possible mechanisms (see Section~\ref{sec:examples1}).
We then define a growth- or fitness landscape determined by the function
\begin{equation}\label{eq:h}
    h(u,a)= \frac{b(u,a)}{c(u)},
\end{equation}
where the higher order interaction function $c: \mathbb{R} \rightarrow (0,\infty)$ is $C^\infty$ and positive to avoid unbounded growth.
The affine form \ref{H-logkill} dictates the way in which the landscape changes under the effect of treatment.
The cancer cells continuously adapt to these changes, chasing the maxima of $h(u,\alpha(t))$ (see Figure~\ref{fig:3d}).
We assume that for increasing treatment values these maxima shift always toward higher values of resistance.
This is enforced by the relation
\begin{enumerate}[resume,label=\textbf{H\arabic*.},ref=H\arabic*]
    \item \label{H-nonlinear} $\displaystyle{ \frac{c'(u)}{c(u)} > \frac{b'_1(u)}{b_1(u)}}$ for all $u\in \mathbb{R}$.
\end{enumerate}
Lastly, we assume that fitness maxima are attained at only finitely many trait-values $u$ and that the landscape eventually decays to negative infinity. 
This means that the evolutionary dynamics select only for values on an interval, outside of which the landscape forms a barrier.
If, for example, the trait-value represents the physical size of the cells (cp. \cite{carrere2017}), maxima of the fitness must be found only at positive values with a prohibitively high death-rate below $u=0$.
\begin{enumerate}[resume,label=\textbf{H\arabic*.},ref=H\arabic*]
    \item \label{H-dissipative} $h(\cdot,a)$ has a finite number of critical points, isolated for all $a \in \mathbb{R}$ and is radially unbounded, $h(u,a) \rightarrow -\infty$ for $|u|\rightarrow \infty$ for all $a \in [0,\infty)$.
\end{enumerate}

\begin{remark}
    Convergence of the Markov process to the deterministic gradient flow describing trait dynamics occurs in the limit of small jumps $\sigma^2 \rightarrow 0$ \cite{champagnat2002}.
    In system~(\ref{sys:full}), $\varepsilon$ therefore acts as a small timescale separation parameter $0 < \varepsilon \ll 1$.
    On the slow timescale $\tau = \varepsilon t$ we recover the canonical equation of adaptive dynamics in the limit $\varepsilon \rightarrow 0$. Methods from geometric singular perturbation theory will be useful in the study of (\ref{sys:full}) for fixed $a \in A$ later. 
\end{remark}

\begin{figure}[ht]
 \begin{center}
  \includegraphics[width=1\linewidth]{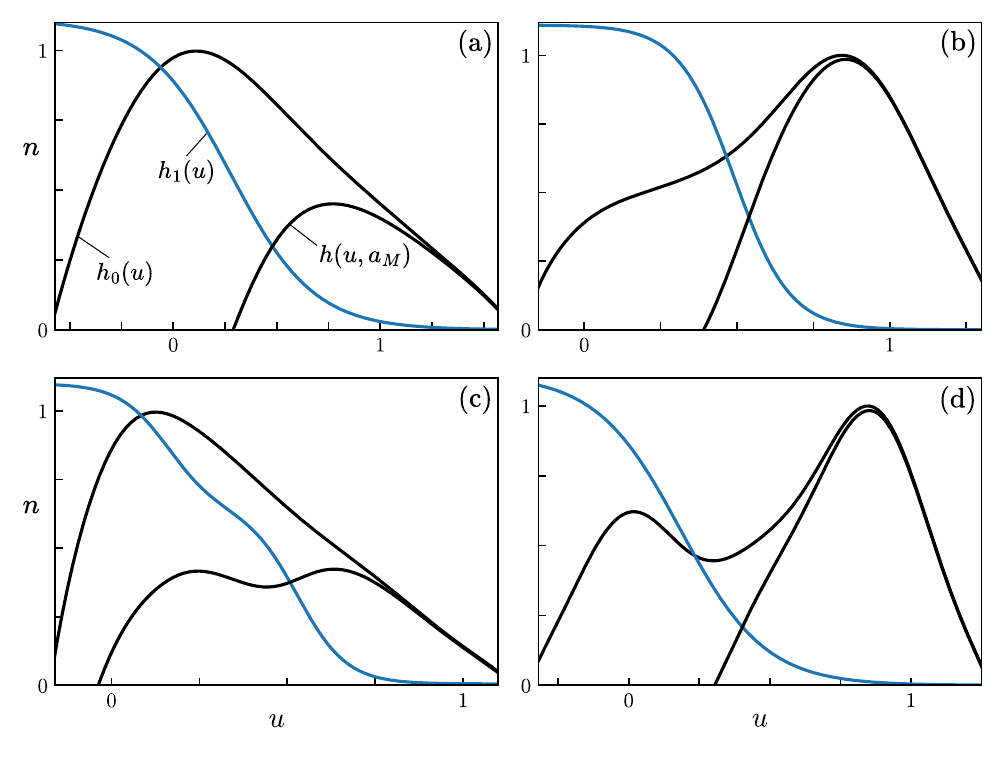}
  \caption{Examples of growth landscape $h(u,a) = h_0(u) - a h_1(u)$ \eqref{eq:h} for treatment values $a \in  \{0,a_M\}$. (a) Simple cost of resistance landscape. (b) An example without cost of resistance, where the growth landscape takes a unique maximum for high trait-values, remaining nearly unaffected by the treatment. (c) A more complex cost of resistance example. Under the effect of treatment, a single global maximum of the landscape is deformed into a ``double well''-type potential at $a = a_M$. (d) A landscape with partial cost of resistance. It has a global maximum at high resistance for $a=0$, shielded by a local maximum near $u=0$. Parameter values and functions used can be found in Appendix~\ref{app:parameters}. }
  \end{center}
  \label{fig:potentials}
 \end{figure}

\section{Preliminary results}\label{sec:prelim}

We write the system~(\ref{sys:full}) more compactly as an affine control or switching system
\begin{equation}\label{sys:general}
    \dot{x} = f(x,\alpha(t)) = f_0(x) + \alpha(t) f_1(x).
\end{equation}
Components are denoted as $x = (u,n)$.
According to \ref{H-logkill} the vector field is split into parts given by
\begin{align}\label{eq:f0f1}
    f_0(x) = \mqty(\varepsilon k(n) (b'_0(u) - c'(u)n) \\ n (b_0(u) - c(u) n)),  \quad f_1(x) = -\mqty(\varepsilon k(n) b'_1(u) \\ n b_1(u)).
\end{align}
Dynamics of the system are determined by the family of $C^1$ vector fields $F = \{ f(\cdot,a) \mid a \in A\}$, where $f(\cdot,a):X \rightarrow \mathbb{R}^2$ for $X = \mathbb{R} \times [0,\infty)$. Each system $\dot{x}=f(x,a)$ generates a flow $\phi^a : \mathbb{R} \times X \rightarrow X$ that maps an initial datum $x_0$ to the solution at time $t$ given by $\phi^a(t,x_0)=\phi^a_t(x_0)$.
The forward orbit under any flow $\phi$ from a point $x_0$ is given by the image
\begin{equation*}
    \gamma(x_0,\phi) = \{ \phi_t(x_0) \mid t \geq 0 \}.
\end{equation*}
Similarly, we denote by $\Wstab(p,\phi), \Wunst(p,\phi)$ the stable and unstable manifold of equilibria $p \in X$ for $\phi$.
Central to our analysis is the following generalization of the forward orbit to families of vector fields.
\begin{definition}[cf. \cite{ColoniusKliemann2000}]\label{def:reachable}
    The \textup{reachable set} from a point $x_0 \in X$ under the family $\{f(\cdot,a) \mid a \in A\}$ is given by 
    \begin{equation*}
     \Gamma(x_0) = \{ \phi^{a_m}_{t_m}  \circ ... \circ \phi^{a_1}_{t_1}(x_0) \mid t_j \geq 0,  a_j \in A, m \in \mathbb{N} \}.
    \end{equation*}
\end{definition}
For a given initial condition $x_0$ and an input $\alpha \in \mathcal{A}$ there exists a unique solution of (\ref{sys:general}) in the sense of Carath\'{e}odory \cite{coddington1955}. 
It is absolutely continuous and appears stitched together from a sequence of Cauchy problems (see Fig.~\ref{fig:reachable}).
The reachable set is the union of all possible such sequences.
The treatment sequence $(a_j)$ corresponds to a sequence of adaptive landscapes $(h(\cdot,a_j))$ with respect to which the trait will evolve (see Fig.~\ref{fig:3d}).
We define the cemetery state 
\begin{equation}\label{eq:E}
 E = \{(u,0) \mid u \in \mathbb{R} \}.
\end{equation}
By \ref{H-K} this is a line of equilibrium solutions for any $a\in \mathbb{R}$. 
It is useful to define the following simplified system, in which $E$ no longer consists of solutions:
 \begin{align}\label{sys:reduced}
         \begin{cases}
            \dot{u} = \varepsilon \left(b'(u,\alpha(t)) -c'(u)n\right) \\
            \dot{n} = \frac{1}{\widetilde{k}(n)} \left( b(u,\alpha(t)) - c(u)n \right),
        \end{cases}
     \end{align}
where $\widetilde{k}(n) = k(n)/n$. 
In the following we will refer always to (\ref{sys:full}) as the full system and to (\ref{sys:reduced}) as the reduced system. 
The reduced family $\widetilde{F} = \{ \widetilde{f}(\cdot,a) \mid a \in A \}$ is considered on an extended state space $\ftil(\cdot,a):\Xtil \rightarrow \mathbb{R}^2$, where $\Xtil = \mathbb{R}^2$ by assumption \ref{H-K}. 
We denote by $\widetilde{\phi}^a$ the reduced flow and reachable sets using $\widetilde{\Gamma}$.
Up to intersections with $E$, reachable sets for systems (\ref{sys:full}) and (\ref{sys:reduced}) are the same (see Fig. \ref{fig:reachable}). 
\begin{lemma}\label{prop:equivalence}
    For the killing time $\tau_E:X \times A \rightarrow [0,\infty]$,
    \begin{equation*}
        \tau_E(x_0,a) = \inf \{ t \geq 0 \mid \phitil^a_t(x_0) \in E \},
    \end{equation*}
    it holds that
    \begin{gather*}
        \gamma(x_0,\phi^a) = \{ \phitil^a_t(x_0) \mid 0 \leq t < \tau_E(x_0,a) \}, \\
        \Gamma(x_0) = \{ \phitil^{a_m}_{t_m}  \circ ... \circ \phitil^{a_1}_{t_1}(x_0) \mid 0 \leq t_{j} < \tau_E(x_{j-1},a_{j}),  a_j \in A, m \in \mathbb{N} \},
    \end{gather*}
    where $x_j = \phitil^{a_j}_{t_j} (x_{j-1})$.
\end{lemma}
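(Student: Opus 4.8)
The plan is to exploit that, on the half-plane $\{n>0\}$, the full and reduced vector fields are positive scalar multiples of one another, so that they share the same orbits up to a time change. Indeed \ref{H-K} gives $k(n)=n\,\widetilde{k}(n)$ with $k(n)>0$ for $n>0$, and comparing \eqref{sys:full} with \eqref{sys:reduced} shows $f(x,a)=k(n)\,\ftil(x,a)$ on $\{n\ne0\}$. The case $x_0\in E$ is degenerate ($E$ consists of equilibria of the full system for every $a$, so $\gamma(x_0,\phi^a)=\{x_0\}$), so I assume $x_0=(u_0,n_0)$ with $n_0>0$. I would first observe that the full flow from such $x_0$ stays in $\{n>0\}$ for all forward time, since near $n=0$ one has $\dot n=n\big(b(u,a)-c(u)n\big)$ and hence $n(t)$ decays at most exponentially; thus $\gamma(x_0,\phi^a)\subset\{n>0\}$.

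Write $y(s)=\phi^a_s(x_0)$, with $n$-component $y_2(s)>0$, and introduce the time change $t(s)=\int_0^s k\big(y_2(\sigma)\big)\,d\sigma$. As the integrand is continuous and strictly positive, $t(\cdot)$ is a strictly increasing $C^1$ bijection of $[0,\infty)$ onto $[0,T)$ with $T:=\int_0^\infty k\big(y_2(\sigma)\big)\,d\sigma\in(0,\infty]$. Setting $z(t)=y\big(s(t)\big)$ on $[0,T)$, the chain rule together with $f=k(n)\ftil$ gives $\dot z=\ftil(z,a)$, $z(0)=x_0$; since $\ftil(\cdot,a)$ is $C^1$ on $\{n>0\}$ (the only potentially singular factor $1/\widetilde{k}(n)$ is $C^1$ and positive there, as $\widetilde{k}=k/n$) and $z$ remains in $\{n>0\}$, uniqueness yields $z(t)=\phitil^a_t(x_0)$ for $t\in[0,T)$. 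Because $s\mapsto t(s)$ maps $[0,\infty)$ onto $[0,T)$, the traced sets agree: $\{\phitil^a_t(x_0)\mid 0\le t<T\}=\gamma(x_0,\phi^a)\subset\{n>0\}$, which already gives $\tau_E(x_0,a)\ge T$.

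The reverse inequality $\tau_E(x_0,a)\le T$ is the one delicate step, and the point I expect to be the main obstacle: one must rule out that the reduced trajectory ``exhausts its reparametrized time'' while still at positive population. If $T=\infty$ there is nothing to show. If $T<\infty$ and yet $\tau_E(x_0,a)>T$, then by definition of $\tau_E$ the reduced solution stays off $E$ on $[0,T]$, so $z^*:=\phitil^a_T(x_0)$ has positive $n$-component; by continuity of the reduced flow $z(t)\to z^*$ as $t\to T^-$, and since $s(t)\to\infty$ as $t\to T^-$ this forces $y(s)\to z^*$ as $s\to\infty$. Then $k\big(y_2(s)\big)\ge\tfrac12 k(z^*_2)>0$ for all large $s$, contradicting $T=\int_0^\infty k(y_2)<\infty$. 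Hence $\tau_E(x_0,a)=T$, which proves the first identity.

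For the formula for $\Gamma(x_0)$ I would induct on the number $m$ of switches. With the recursion $x_j=\phitil^{a_j}_{t_j}(x_{j-1})$, $0\le t_j<\tau_E(x_{j-1},a_j)$, every intermediate point has positive $n$-component, so the previous analysis applies at each stage; by the first identity, the set of points reachable from $x_{j-1}$ in one full-flow step, namely $\bigcup_{a\in A}\gamma(x_{j-1},\phi^a)$, equals $\bigcup_{a\in A}\{\phitil^a_t(x_{j-1})\mid 0\le t<\tau_E(x_{j-1},a)\}$. Composing this with the induction hypothesis that the $m$-fold full and reduced reachable sets coincide gives the $(m+1)$-fold statement, and taking the union over $m\in\mathbb N$ yields the claim.
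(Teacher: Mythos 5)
Your proposal is correct and follows essentially the same route as the paper: both rest on the factorization $f(x,a)=k(n)\ftil(x,a)$ from \ref{H-K} and a strictly monotone time reparametrization identifying the full and reduced orbits away from $E$, then iterate over the switching sequence. The only difference is cosmetic — you rescale from full time to reduced time rather than the reverse, and you spell out the endpoint identification $T=\tau_E(x_0,a)$ (and the degenerate case $x_0\in E$), details the paper's proof leaves implicit.
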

\begin{proof}
    A trajectory of the reduced system under the input $\alpha \in \mathcal{A}$ corresponds to the sequence $(x_j)_{j \in J} = (\phitil^{a_j}_{t_j} (x_{j-1}))_{j \in J}$, where $J \subset \mathbb{N}$.
    Let $\phitil^a_t(x)$ be an element of this sequence.
    From \ref{H-K} we can factor $k(n) = n \widetilde{k}(n)$, where $\widetilde{k}>0$. Then $f(x,a)=k(n) \ftil(x,a)$ such that, for fixed $a$, full and reduced vector field are the same up to a $C^1$ function that preserves the direction of time for $n>0$.
    Times $t$ and $s$ of the reduced and full system respectively are related as 
    \begin{equation}\label{eq:stime}
        s(t,x,a) = \int^{t}_0 \frac{d\tau}{k([\phitil^a_\tau(x)]_n)},
    \end{equation}
    where $[\cdot]_n$ denotes the $n$-component.
    The integral exists for $t < \tau_E(x,a)$ and we write 
    \begin{equation*}
        x_j = \phitil^{a_j}_{t_j}(x_{j-1}) = \phi^{a_j}_{s_j}(x_{j-1}),
    \end{equation*}
    where $s_j = s(t_j,x_{j-1},a_j)$, for any element of the sequence with $t_j < \tau_E(x_{j-1},a_j)$.
\end{proof}

\begin{figure}[ht]
 \begin{center}
  \includegraphics[width=0.8\linewidth]{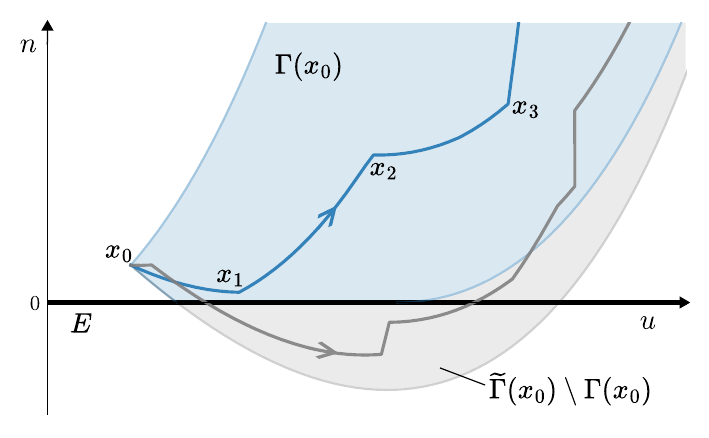}
  \caption{Comparison of reachable sets for the full system $\Gamma(x_0)$ and reduced system $\widetilde{\Gamma}(x_0)$ from a point $x_0 \in S$; see (\ref{eq:S}). The blue curve is a (absolutely continuous) sample trajectory traversing $x_j = \phi^{a_j}_{t_j}(x_{j-1})$. Because this trajectory does not intersect $E$, it is the same for full and reduced system under the rescaling of time (\ref{eq:stime}). The grey sample trajectory intersects $E$ for $t=\infty$ in the full system and (twice) in finite time for the reduced system.}
  \label{fig:reachable}
  \end{center}
 \end{figure}

By Lemma~\ref{prop:equivalence}, initial states for which the reachable set for the full (\ref{sys:full}) and reduced system (\ref{sys:reduced}) do not coincide is given by
\begin{equation}
    S = \{ x_0 \in X \mid \widetilde{\Gamma}(x_0) \cap E \neq \emptyset \}. \label{eq:S}
\end{equation}
From points in $S$ the cell population fails to adapt enough to avoid extinction under certain inputs $\alpha \in \mathcal{A}$, limiting to the cemetery state $E$ under the dynamics of the full system~(\ref{sys:full}).
Due to the $u$-component evolving as the gradient of the $n$-component, the only nonempty limit sets for the family $\phitil^a$ are equilibria.
By (\ref{sys:reduced}), these are critical points of the function $h(\cdot,a)$ (\ref{eq:h}) that defines the growth landscape for the cancer cells.
We define the manifold of equilibria
\begin{align}
    \mathcal{P} &= \{ (x,a) \mid \ftil(x,a) = 0 \} \subset \mathbb{R}^2 \times \mathbb{R} \nonumber \\
                &= \{ (x,a) \mid a = a_\star(u) \text{ and } n = h_\star(u) \}, \label{eq:Pgraph}
\end{align}
where $h_\star(u) = h(u,a_\star(u))$.
Equation~(\ref{eq:Pgraph}) follows from \ref{H-logkill} and \ref{H-nonlinear}, using the implicit function theorem to write $\mathcal{P}$ globally as the graph of a smooth function parametrized by $u$.
Restricting to the bounded input range $A \subset [0,a_M]$ yields the subset of feasible equilibria $\mathcal{P}_A = \mathcal{P}\rvert_{a \in A}$.
By \ref{H-dissipative}, $h(\cdot,a)$ has a finite number of critical points for all $a \in A$ and is eventually monotone.
The set $\mathcal{P}_A$ thus lies contained in a compact subset $U \times N \times A \subset \mathbb{R}^3$ (see Figure~\ref{fig:PA}(c)).
The simple bifurcation diagram for the family $\widetilde{F}$ is characterized in the following.

\begin{lemma}\label{prop:partitionP}
    The set of equilibria $\mathcal{P}$ can be partitioned as
    \begin{align*}
        \Pstab &= \{ (x,a) \in \mathcal{P} \mid a_\star'(u) > 0 \} \neq \emptyset, \\
        \Punst &= \{ (x,a) \in \mathcal{P} \mid a'_\star(u) < 0 \}, \\
        \Pbif &= \{ (x,a) \in \mathcal{P} \mid a'_\star(u) = 0 \},
    \end{align*}
    into stable nodes, saddles and isolated nonhyperbolic points of the corresponding flows $\phitil^{a_\star}$. 
    The subset $\{(x,a) \in \Pbif \mid a''_\star(u) \neq 0 \}$ consists of generic fold bifurcation points of the corresponding flows $\phitil^{a_\star}$.
\end{lemma}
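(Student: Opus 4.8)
The plan is to exploit the gradient structure of the $u$-equation, which makes the Jacobian of $\widetilde{f}(\cdot,a)$ at an equilibrium triangular, and then to convert every hyperbolicity and bifurcation condition into a sign condition on derivatives of $h$ and of $a_\star$. As a preliminary, observe that \ref{H-logkill} and \ref{H-nonlinear} applied to $h=h_0-a h_1$ with $h_i=b_i/c$ give $h_1'(u)<0$ for all $u$ (since $b_1,c>0$ and $c'/c>b_1'/b_1$), hence $\partial_a\partial_u h=-h_1'>0$, and by \eqref{eq:Pgraph} the function $a_\star(u)=h_0'(u)/h_1'(u)$ is globally defined and smooth with $n=h_\star(u)$ on $\mathcal{P}$.

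First I would compute the Jacobian of $\widetilde{f}(\cdot,a)$ at a point of $\mathcal{P}$. Because $\widetilde{f}_2$ vanishes there, its $u$-derivative $\tfrac{1}{\widetilde{k}(n)}\bigl(b'(u,a)-c'(u)n\bigr)$ equals $\widetilde{f}_1/\varepsilon=0$ at the equilibrium, so the Jacobian is upper triangular with eigenvalues $\lambda_2=-c(u)/\widetilde{k}(n)$ and $\lambda_1=\varepsilon\bigl(b''(u,a)-c''(u)n\bigr)$. Since $c>0$ and $\widetilde{k}>0$ on $\mathbb{R}$ by \ref{H-K}, one has $\lambda_2<0$ unconditionally, so the type of each equilibrium is governed entirely by the sign of $\lambda_1$. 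Substituting $n=b/c$ together with the identity $\partial_u^2 h=(b''c-bc'')/c^2$, valid at critical points of $h(\cdot,a)$, gives $\lambda_1=\varepsilon c(u)\,\partial_u^2 h(u,a_\star(u))$; and differentiating $\partial_u h(u,a_\star(u))\equiv0$ in $u$ yields $\partial_u^2 h(u,a_\star(u))=h_1'(u)\,a_\star'(u)$. Because $h_1'<0$, we obtain $\sgn\lambda_1=-\sgn a_\star'$: on $\Pstab$ both eigenvalues are real and negative (stable node), on $\Punst$ one is positive and one negative (saddle), and on $\Pbif$ there is a single zero eigenvalue (non-hyperbolic). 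These three sets visibly partition $\mathcal{P}$; and by \ref{H-dissipative} each $h(\cdot,a)$ has only finitely many, isolated critical points, so every such equilibrium is isolated for its flow. To see $\Pstab\neq\emptyset$: by \ref{H-dissipative}, $h_0\to-\infty$ with finitely many critical points, so $h_0'<0$ for $u$ large and $h_0'>0$ for $u$ very negative; since $h_1'<0$ throughout, $a_\star=h_0'/h_1'$ is positive for large $u$ and negative for very negative $u$, and the mean value theorem produces a point with $a_\star'>0$.

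For the fold statement, fix $(x_0,a_0)\in\Pbif$ with $a_\star''(u_0)\neq0$. The zero eigenvalue is simple ($\lambda_2<0$), the centre eigenspace is the $u$-axis, and $\mathcal{P}$ is a smooth curve tangent to it at this point because $h_\star'(u_0)=-h_1(u_0)\,a_\star'(u_0)=0$. Reducing to the one-dimensional centre manifold, the bifurcation is a generic fold (saddle--node) precisely when the quadratic coefficient and the parameter-transversality coefficient are both nonzero. A short computation---differentiating $c\,\partial_u h=b'-c'h$ twice and using $\partial_u h=\partial_u^2 h=0$ at $u_0$---identifies the quadratic coefficient with $\varepsilon c(u_0)\,\partial_u^3 h(u_0,a_0)$, while differentiating $\partial_u^2 h(u,a_\star(u))=h_1'(u)\,a_\star'(u)$ once more gives $\partial_u^3 h(u_0,a_0)=h_1'(u_0)\,a_\star''(u_0)\neq0$; the transversality coefficient equals $-\varepsilon c(u_0)\,h_1'(u_0)>0$, automatic from \ref{H-nonlinear}. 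Hence these points are generic folds, where a branch of $\Pstab$ meets a branch of $\Punst$.

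The conceptual crux I anticipate is the Jacobian computation: realising that triangularity forces $\lambda_2<0$ for free, so that the whole classification reduces to the scalar sign of $a_\star'$. The main technical care is needed in the fold part: matching the centre-manifold non-degeneracy condition to $\partial_u^3 h\neq0\iff a_\star''\neq0$ and checking that the transversality condition collapses to \ref{H-nonlinear}.
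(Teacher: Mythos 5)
Correct, and essentially the paper's own argument: you compute the same triangular Jacobian along $\mathcal{P}$, observe that the eigenvalue $-c/\widetilde{k}$ is negative unconditionally, and reduce the sign of the remaining eigenvalue to $-\sgn a_\star'(u)$ via $\partial_u^2 h(u,a_\star(u))=h_1'(u)\,a_\star'(u)$ with $h_1'<0$, which is exactly the paper's factorization $\varepsilon\frac{b_1'c-b_1c'}{c}a_\star'$ written in $h$-notation. The only deviations are harmless elaborations: you certify the fold through the explicit saddle--node quadratic and parameter-transversality coefficients on the center manifold (both computed correctly, with transversality following from \ref{H-nonlinear}), where the paper argues via the quadratic tangency of $(u,a_\star(u))$ to $\{a=a_\star(u_1)\}$, and you spell out $\Pstab\neq\emptyset$ with a mean-value argument where the paper just cites \ref{H-dissipative}; also note the trivial slip that $\partial_u\ftil_2$ equals $\ftil_1/(\varepsilon\widetilde{k})$ rather than $\ftil_1/\varepsilon$, which still vanishes on $\mathcal{P}$ as needed.
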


 \begin{figure}[ht]
 \begin{center}
  \includegraphics[width=1\textwidth]{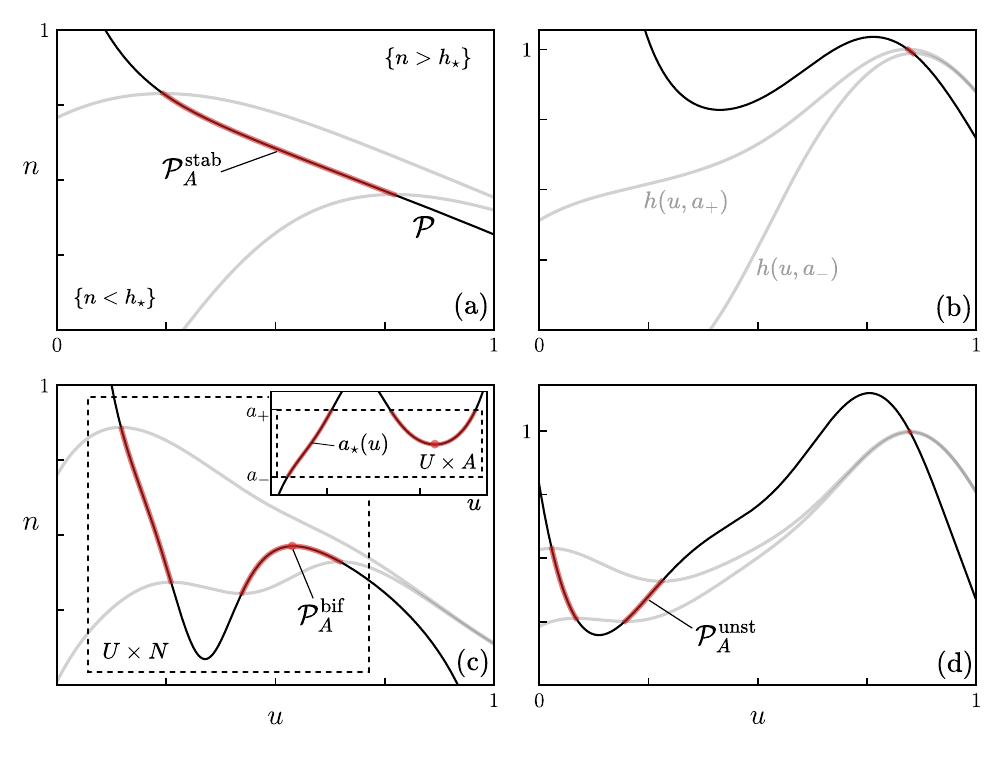}
  \caption{Examples for the manifold of equilibria $\mathcal{P}$ and feasible equilbria $\mathcal{P}_A$ projected on $X$ for a chosen bounded treatment range $A = [a_-,a_+] \subset [0,a_M]$. Function $h(u,a)$ chosen as in Fig.~\ref{fig:potentials}.
  (a) Connected $\mathcal{P}_A = \Pstab_A$ consisting only of stable nodes. (b) Stable node largely unaffected by the treatment range.
  (c) Disconnected $\mathcal{P}_A$ in two components, one comprised only of stable nodes, the other of nodes, saddles and an isolated fold bifurcation point (red dot). Dashed box shows a possible compact set $U \times N$ containing all feasible equilibria. Inset shows equilibria as $(u,a_\star(u))$ and $U \times A$. In the context of section~\ref{sec:palliative}, the two components $\IA$ are of type (1) and (3). 
  (d) Disconnected $\mathcal{P}_A$ in three components, comprised of only nodes, or only saddles. Right most component is nearly invisible due to small effect of treatment.}
  \label{fig:PA}
  \end{center}
 \end{figure}

 \begin{proof}
     Differentiating $\ftil(u,h_\star(u),a_\star(u))=0$ in $u$ yields (in shorthand)
     \begin{equation*}
         a'_\star = \frac{c(b''-c''h_\star)}{b'_1c-b_1c'}.
     \end{equation*}
     Linearization along $\mathcal{P}$ is thus given by
     \begin{equation}
         D_x \widetilde{f}\rvert_\mathcal{P} = \mqty(  \varepsilon (b''- c''h_\star) & -\varepsilon c' \\
                        0 &  - \frac{c}{\widetilde{k}(h_\star)} ) 
                        = \mqty(  \varepsilon \frac{b'_1c-b_1c'}{c} a'_\star  & -\varepsilon c' \\
                        0 &  - \frac{c}{\widetilde{k}(h_\star)} ), \label{eq:Jac}
     \end{equation}
     with spectrum we can write as $\{-\lambda_1(u),\varepsilon \lambda_2(u) a'_\star(u)\}$ where $\lambda_2(u)<0$ by \ref{H-nonlinear} and $\lambda_1(u)=c(u)/\widetilde{k}(h_\star(u))>0$ by \ref{H-K}.
     The stability is thus determined by $a'_\star(u)$, both eigenvalues being negative for $a'_\star(u) > 0$ (stable nodes) and having opposite signs for $a'_\star(u) < 0$ (saddles).
    By dissipativity \ref{H-dissipative}, the set of nodes is non-empty.
    An equilibrium $p=(u_1,n_1)$ is nonhyperbolic only when $a'_\star(u_1)=0$.
    By \ref{H-dissipative} such points are isolated.
    The tangency of the graph $(u,a_\star(u))$ to $\{a = a_\star(u_1)\}$ is quadratic for $a_\star''(u_1) \neq 0$ and we have a generic fold bifurcation.
    If instead this term vanishes, there is either no qualitative change to the flow (leading order derivative $a^{(2m+1)}_\star(u_1) \neq 0$, $m\ge1$) or a degenerate fold bifurcation (leading order derivative $a^{(2m)}_\star(u_1) \neq 0$, $m>1$).   
 \end{proof}

 Of particular interest in Section~\ref{sec:palliative} are ranges $A$ for which the set of nonhyperbolic points $\Pbif_A$ does not contain equilibria corresponding to the extreme values of the input.

\begin{definition}\label{def:hyperbolic}
    The input range $A = [a_-,a_+]$ is called \textup{hyperbolic}, if all equilibria for the vector fields $\ftil(\cdot,a_\pm)$ are hyperbolic.
\end{definition}

For the full vector field $f(\cdot,a)$ there exist additional nonhyperbolic points, where an interior equilibrium collides with the cemetery state $E$.
From (\ref{eq:Pgraph}), the graph $H^\star=(u,h_\star(u))$ partitions the state space into ``upper" and ``lower" components denoted $\{n > h_\star\}$ and $\{ n < h_\star \}$ (see Figure~\ref{fig:PA}(a)).
Due to the affine structure (\ref{sys:general}) the relative orientation of vector fields with different values of $a$ must remain fixed with respect to each other throughout the upper and lower component, see Lemma~\ref{prop:angle_condition}. 
In the following let $\inner{\cdot}{\cdot}$ denote the standard Euclidean inner product.
For any $v = (v_1,v_2)^T$ we define an orthogonal vector by the convention $v^\perp = (-v_2,v_1)^T$.
\vspace{0.2cm}
\begin{lemma} \label{prop:angle_condition}
    Let $a_1,a_2 \in \mathbb{R}$ and $a_1 < a_2$. Then it holds that
    \begin{equation*}
       \sgn \inner{f(x,a_1)^{\perp}}{f(x,a_2)} = - \sgn \inner{f(x,a_2)^{\perp}}{f(x,a_1)}  = \begin{cases}
                                                    + 1 \textup{ for } x \in \{ n > h_\star \} \\
                                                    - 1 \textup{ for } x \in \{ n < h_\star \}.
                                                \end{cases}
    \end{equation*}
\end{lemma}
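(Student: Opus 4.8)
The plan is to exploit the affine structure \eqref{sys:general} so that the pairing of two \emph{different} fields collapses to a single fixed computation. Recall that $v\mapsto v^\perp$ is linear and that $\inner{v^\perp}{w}=v_1w_2-v_2w_1$ is the skew-symmetric $2\times2$ determinant, so $\inner{v^\perp}{v}=0$ for every $v$. Since $f(x,a_2)=f(x,a_1)+(a_2-a_1)f_1(x)$ and $f(x,a_1)=f_0(x)+a_1f_1(x)$, applying bilinearity twice gives
\[
  \inner{f(x,a_1)^\perp}{f(x,a_2)}=(a_2-a_1)\,\inner{f(x,a_1)^\perp}{f_1(x)}=(a_2-a_1)\,\inner{f_0(x)^\perp}{f_1(x)}.
\]
Skew-symmetry then yields $\inner{f(x,a_2)^\perp}{f(x,a_1)}=-\inner{f(x,a_1)^\perp}{f(x,a_2)}$, which is the middle equality; and since $a_1<a_2$, the statement reduces to determining $\sgn\inner{f_0(x)^\perp}{f_1(x)}$ on the two components $\{n>h_\star\}$ and $\{n<h_\star\}$.

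Next I would compute this one determinant directly from \eqref{eq:f0f1}. A short calculation gives
\[
  \inner{f_0(x)^\perp}{f_1(x)}=\varepsilon\,k(n)\,n\Big[\big(b_1'(u)b_0(u)-b_1(u)b_0'(u)\big)+n\big(b_1(u)c'(u)-b_1'(u)c(u)\big)\Big].
\]
On $E$ the field $f$ vanishes identically, so the claim is vacuous there and we may take $n>0$; then $\varepsilon\,k(n)\,n>0$ by \ref{H-K} (since $k$ vanishes only at $0$ and $k'(0)>0$ force $k>0$ on $(0,\infty)$). The bracket is affine in $n$ with leading coefficient $b_1(u)c'(u)-b_1'(u)c(u)$, which is strictly positive: this is precisely \ref{H-nonlinear} after multiplying through by $b_1(u)c(u)>0$. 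Hence for each fixed $u$ the bracket is strictly increasing in $n$ and has a unique zero.

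It remains to identify that zero with $h_\star(u)$. Writing out $\ftil(u,h_\star(u),a_\star(u))=0$ via \eqref{sys:reduced} and \ref{H-logkill} gives the two relations $b_0'(u)-a_\star(u)b_1'(u)=c'(u)h_\star(u)$ and $b_0(u)-a_\star(u)b_1(u)=c(u)h_\star(u)$; eliminating $a_\star(u)$ between them yields $h_\star(u)=\big(b_1(u)b_0'(u)-b_1'(u)b_0(u)\big)/\big(b_1(u)c'(u)-b_1'(u)c(u)\big)$, i.e. the root of the bracket. (Conceptually, this holds because along the graph $H^\star=(u,h_\star(u))$ every field of the family is parallel to $f_1$: by \eqref{eq:Pgraph} the point is an equilibrium of $\ftil(\cdot,a_\star(u))$ and hence of $f(\cdot,a_\star(u))=k(n)\ftil(\cdot,a_\star(u))$, so $f(x,a)=(a-a_\star(u))f_1(x)$ and any two of these fields are collinear there.) Consequently
\[
  \inner{f_0(x)^\perp}{f_1(x)}=\varepsilon\,k(n)\,n\,\big(b_1(u)c'(u)-b_1'(u)c(u)\big)\,\big(n-h_\star(u)\big),
\]
and combining with the first step, $\inner{f(x,a_1)^\perp}{f(x,a_2)}$ is a strictly positive quantity times $n-h_\star(u)$, which is exactly the asserted sign pattern.

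I expect the only delicate point to be this last identification — recognizing the zero of the $n$-affine bracket as $h_\star(u)$, equivalently that the bracket is a positive multiple of $n-h_\star(u)$. Everything else is bilinear algebra together with the elementary sign consequences of \ref{H-K} and \ref{H-nonlinear}.
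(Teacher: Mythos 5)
Your proposal is correct and follows essentially the same route as the paper: bilinearity and skew-symmetry of $\inner{(\cdot)^\perp}{\cdot}$ reduce everything to $(a_2-a_1)\inner{f_0(x)^{\perp}}{f_1(x)}$, whose sign is then identified with $\sgn(n-h_\star(u))$ via \ref{H-logkill} and \ref{H-nonlinear}. The paper states this last sign identity without computation, whereas you verify it explicitly by evaluating the determinant from \eqref{eq:f0f1} and eliminating $a_\star(u)$ to recognize the root of the $n$-affine bracket as $h_\star(u)$ — a correct filling-in of the paper's ``follows directly'' step.
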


\begin{proof}
    Follows directly from
    \begin{align*}
        \inner{f(x,a_1)^{\perp}}{f(x,a_2)} &= a_1 \inner{f_1(x)^{\perp}}{f_0(x)} + a_2 \inner{f_0(x)^{\perp}}{f_1(x)} \\
        &= (a_2 - a_1) \inner{f_0(x)^{\perp}}{f_1(x)},
    \end{align*}
    and $\sgn \inner{f_0(x)^{\perp}}{f_1(x)} = \sgn (n - h_\star(u))$ by \ref{H-logkill} and \ref{H-nonlinear}. The set $H^\star =\{ n = h_\star \}$ by construction consists of those points where $f_0$ and $f_1$ are linearly dependent (see (\ref{eq:f0f1})).
\end{proof}

This simple property allows making statements about the entire family of vector fields $F$ (and thus $\Gamma$) by considering only the autonomous vector fields $f(\cdot,a_\pm)$ for extreme values of the input.

\subsection{Examples}\label{sec:examples1}

We consider four examples\footnote{For the functions and parameter values used in the examples, see Appendix~\ref{app:parameters}.} (a)--(d) of growth landscapes and the resulting manifolds of equilibria on $X$, shown in Figures~\ref{fig:potentials} and {\ref{fig:PA}.
They describe four different ways in which treatment, resistance and population growth may interact.
By construction, the components of the vector field $f_1$ (\ref{eq:f0f1}) have fixed opposite sign throughout $X \setminus E$, treatment always reducing population size to some extent, but inducing resistance.
On the other hand, depending on the general function $b_0$, there can exist subsets of $X$ where the treatment independent vector field $f_0$ drives up the cell population while re-sensitizing them to the drug.
Trade-offs of this type are used in adaptive therapy \cite{gatenby2020} to prolong responsiveness to therapy when cure is assumed impossible.
It has been proposed that any increase in resistance should result in a concomitant decrease of the growth rate (\textit{cost of resistance}, \cite{gallaher2018}).
A simple example of this mechanism is (a).
This is the landscape studied in \cite{pressley2021}, where a unique global maximum of $h(\cdot,a)$ exists for all treatment values $a \in [0,a_M]$ (a stable node for the corresponding autonomous flow).
The stable node shifts from low resistance values (as defined by $b_1$) near $u=0$ for $a=0$ toward high resistance for $a=a_M$ along $\Pstab_A$.
Experimental results on cost of resistance remain inconclusive with some evidence suggesting the opposite is true \cite{kaznatcheev2019}, i.e. resistance may either increase without the drug or may continue increasing after the drug is removed.
Example (b) shows a system of the former type, where the global maximum is always attained at high values of resistance and there is no mechanism to re-sensitize the cells. 
Applying treatment serves only to further increase the selective force toward high resistance and the stable node moves very little under the effect of the drug, $b_1(u)$ having decayed close to zero.
A related case was studied in \cite{greene2018}, where resistance evolution is assumed irreversible and any treatment eventually results in a fully resistant population. 
Example (c) shows a more complex cost of resistance scenario. For increasing treatment, the landscape with a single global maximum at low resistance is deformed into a bistable one. A new locally optimal trait-value emerges at high resistance. By Lemma~\ref{prop:partitionP} there is a generic fold bifurcation of the autonomous flow $\phi^{a_\star}$ (red dot, Figure~\ref{fig:PA} (c)).
Our final example (d) has a local cost of resistance mechanism that is destroyed for high values of treatment.
In contrast to example (b), there is a low resistance maximum of $h(\cdot,0)$ separated from the globally optimal trait at high resistance by a minimum (saddle of the autonomous flow). 
When the treatment is not too high, in a neighborhood of the left local maximum the system behaves like example (a).
For sufficiently high treatment value, the low-resistance maximum vanishes and only a single optimum remains. Resistance now increases after therapy is stopped and the system then behaves like example (b).

\section{Main results}\label{sec:palliative}

We determine the overall influence the input has on the solutions of the system~(\ref{sys:full}) on $X\setminus S$, i.e. for states from which cure is unattainable under any constant-in-time or time-varying input.
For a bounded input range $A = [a_-,a_+]$, the set of feasible equilibria $\mathcal{P}_A$ is partitioned into its connected components $I_{\mathcal{P}_A}$, i.e. maximal connected subsets.
When $A$ is hyperbolic, see Definition~\ref{def:hyperbolic}, then by Lemma~\ref{prop:partitionP} endpoints of the connected components must be equilibria for extreme values $a \in \{a_\pm\}$ (by maximality) and must appear in three types: (1.) both endpoints are stable nodes, for different values of $a$, (2.) both endpoints are saddles, for different values of $a$, and (3.) one point is a stable node, the other a saddle point for the same value of $a$.
Examples for all three types can be seen in Figure~\ref{fig:PA} (c)--(d).
We make use of the following definition of an enclosed set.

\begin{definition}[cf. \cite{hautus1977}]
    Given a simple closed curve $\Omega$ the set $\enc \Omega$ is the open set given by the bounded component of $\mathbb{R}^2 \setminus \Omega$.
\end{definition}

For sufficiently small rate parameter $\varepsilon$, each of the three types of connected component can be enclosed by a curve in phase space.
The first result of this section is Theorem~1, split into three parts, where we construct these set and show they are controllable in the following sense.

\begin{definition}\label{def:control}
    A set $K \subset \mathbb{R}^d$ is called \textup{controllable} for a family $F$, if it holds that $K \subset \Gamma(x)$ for all $x \in K$.
\end{definition}

In a controllable set, any point can be driven to any other point under some input, allowing adjustments to the system state as desired.
We begin with case (1.).

\begin{manualtheorem}{1.1}\label{prop:Omega1}
    Let $A=[a_-,a_+]$ be hyperbolic and let $I_{\mathcal{P}_A}$ be a connected component with endpoints $(p_\pm,a_\pm) \in \Pstab_A$. If $\varepsilon > 0$ is sufficiently small, then for the simple closed curve
    \begin{equation*}
        \Omega_1 = \gamma(p_-,\phitil^{a_+}) \cup \gamma(p_+,\phitil^{a_-}),
    \end{equation*}
    it holds that $\widetilde{\Gamma}(x) = \enc \Omega_1$ for all $x \in \enc \Omega_1$. 
    If $\Omega_1 \cap E = \emptyset$, then the same holds for $\Gamma(x)$.
\end{manualtheorem}

\begin{figure}[ht]
     \begin{center}
      \includegraphics[width=1\textwidth]{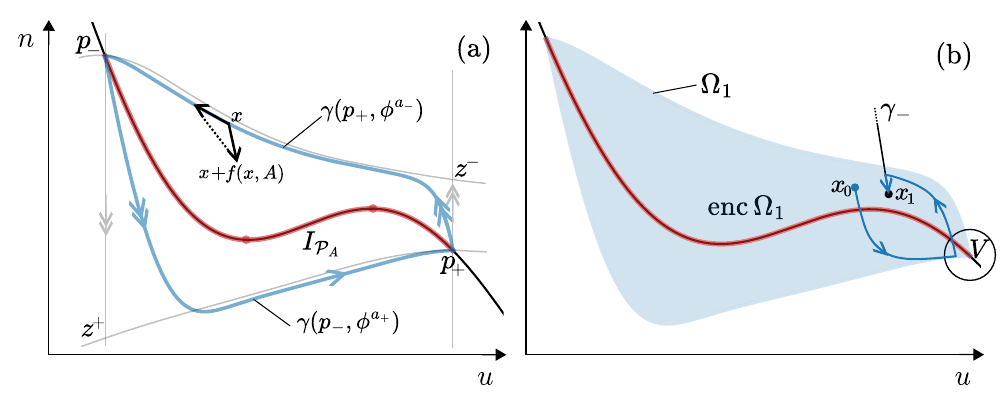}
      \caption{Example of a node-type set $\enc \Omega_1$ surrounding a connected component with endpoints $(p_\pm,a_\pm) \in \Pstab_A$. (a) Construction of the boundary $\Omega_1$ as a union of orbits. Due to Lemma~\ref{prop:angle_condition}, the image $f(x,A)$ lies in the ``lower'' tangent halfspace to $\enc \Omega_1$ at the point $x$.
      (b) The set enclosed by $\Omega_1$ with its controllability property. }
      \label{fig:omega1}
      \end{center}
 \end{figure}

In a addition to controllability, the set enclosed by $\Omega_1$ also has a forward invariance property $\widetilde{\Gamma}(\enc \Omega_1) \subset \enc \Omega_1$.
Any point in the set can be driven only to other points in the same set. Once entered, it is therefore impossible to escape under any $\alpha \in \mathcal{A}$.
We leverage smallness of the rate parameter $\varepsilon$ to ensure the set boundary can be described simply as a union of orbits for autonomous vector fields.

\begin{proof}[Proof of Theorem~\ref{prop:Omega1}]
    We work on the compact subset $U \times N \subset \mathbb{R}^2$ defined in Section~\ref{sec:model}, made large enough to contain all equilibria for $A = [a_-,a_+]$ in its interior. We then proceed in three steps:
    (i) Denote $p_\pm = (u_\pm,n_\pm)^T$. 
    By assumption, $\IA$ is transversal to the planes $\{ a =a_\pm\}$ in the same direction $a'_\star(u_\pm)>0$, so $u_-<u_+$; see Figure (\ref{fig:PA}c), inset.
    The graphs $H^\pm = \{(u,h(u,a_\pm) \mid u \in U\}$ intersect the vertical lines $\{ u = u_\mp\}$ in points $z^\pm \in \mathbb{R}^2$.
    By assumption \ref{H-logkill} the function $h$ is strictly decreasing in $a$ such that $H^\pm$ are disjoint and the points $z^\pm$ lie on opposite sides of the manifold of equilibria $H^\star$.
    Thus there exists a simple closed curve $\Omega'$ contained in the union
    \begin{equation}\label{eq:Omegaprime}
        \left(\{u=u_-\} \cup H^+\right) \cup  \left(\{u=u_+\} \cup H^-\right) .
    \end{equation}
    It follows using standard results from geometric singular perturbation theory, that the simple closed curve $\Omega_1$ is the small $\varepsilon$-perturbation of $\Omega'$.
    In the limit $\varepsilon = 0$, $H^+$ is a manifold of equilibria. Linearization yields always one zero eigenvalue and $-\lambda_1(u)<-\lambda<0$ for a constant $\lambda>0$ on $U$ (see (\ref{eq:Jac})), so $H^+$ is uniformly normally hyperbolic and attracting.
    Trajectories under $\phitil^{a_+}$ in this limit are vertical lines foliating $U \times N$, with the forward orbit from $p_-$ connecting to the base point $z^+ \in H^+$ in $\{ u = u_-\}$.
    By Fenichel's Theorem \cite{fenichel1979}, for $\varepsilon>0$ sufficiently small, $H^+$ perturbs to the $O(\varepsilon)$-close graph $H^+_\varepsilon = \{ (u,h_\varepsilon(u,a_+)) \mid u \in U\}$ of a function $h_\varepsilon$, invariant over $U\times N$ and normally attracting for $\phitil^{a_+}$.
    Moreover, we can identify with $p_-$ a base $z^+_\varepsilon \in H^+_\varepsilon$ such that 
    \begin{equation}\label{eq:tracking}
        | \phitil^{a_+}_t(p_-) - \phitil^{a_+}_t(z^+_\varepsilon)| \le C e^{-\lambda t},
    \end{equation}
    for a constant $C>0$ for all $t \geq 0$.
    The flow of the base point on $H^+_\varepsilon$ is decoupled as 
    \begin{equation}
        \dot{u} = \varepsilon \left(b'(u,a_+)-c'(u)h_\varepsilon(u,a_+)\right).
    \end{equation}
    Because $A$ is hyperbolic there is no equilibrium for $\phitil^{a_+}$ on the strip $M^+ = (v,u_+) \times N$ containing $z^+_\varepsilon$, where $v = \sup \{ u < u_- \mid h'(u,a_+) = 0\}$.
    On $M^+ \cap H^+$ the $u$-component $\ftil_u(\cdot,a_+)$ is strictly increasing. For sufficiently small $\varepsilon>0$ this also holds on $M^+ \cap H^+_\varepsilon$ such that we have $\phitil^{a_+}_t(z^+_\varepsilon) \rightarrow p_+$. 
    From (\ref{eq:tracking}) it then follows that also $\phitil^{a_+}_t(p_-) \rightarrow p_+$.
    The segment $\{u = u_-\} \cup H^+$ in (\ref{eq:Omegaprime}) thus perturbs to the orbit $\gamma(p_-,\phitil^{a_+})$.
    Because $|h'_\star(u)|$ is bounded on $U$, a suitably small choice of $\varepsilon$ also ensures that this orbit does not again intersect the manifold of equilibria $H^\star$, remaining trapped as (see Figure~\ref{fig:omega1})
    \begin{equation}\label{eq:trapping1}
        \gamma(p_-,\phitil^{a_+}) \setminus p_- \subset \{ n < h_\star\}.
    \end{equation}
    Interchanging $a_+$ and $a_-$, the argument to show that the segment $\{u = u_+\} \cup H^-$ perturbs to the orbit $\gamma(p_+,\phitil^{a_-})$ is analogous.
    In this case we use $M^- = (u_-,w) \times N$ with $w = \inf\{ u > u_+ \mid h'(u,a_-) = 0\}$ and the orbit is trapped as
    \begin{equation}\label{eq:trapping2}
        \gamma(p_+,\phitil^{a_-}) \setminus p_+ \subset \{ n > h_\star\}.
    \end{equation}
    \\
    (ii) From (\ref{eq:trapping1}) and (\ref{eq:trapping2}), the property $\Gamma(\enc \Omega_1) \subset \enc \Omega_1$ then follows directly:
    Suppose $x_{j-1} \in \mathbb{R}^2$ such that there exists a time $t_{j}>0$ and a value $a_{j} \in A$ such that $ x_j =\phitil^{a_j}_{t_j}(x_{j-1}) \in \Omega_1$.
    Then, from Lemma~\ref{prop:angle_condition} we have $\inner{\ftil(x_j,a_\pm)^\perp}{\ftil(x_j,a_j)} \geq 0$.
    By conditions (\ref{eq:trapping1}) and (\ref{eq:trapping2}) the orientation of $\Omega_1$ is counter-clockwise in forward time with respect to $\phitil^{a_\pm}$.
    Therefore, the image $\ftil(x_j,A)$ consists of vectors either tangent to $\Omega_1$, or pointing into $\enc \Omega_1$ at $x_j$ (subtangential, see \cite{yorke1967}).
    This implies $x_{j-1} \in \mathbb{R}^2 \setminus \enc \Omega_1$.
    \\
    \\
    (iii) To show the enclosed set is controllable, we fix any pair $x_0,x_1 \in \enc \Omega_1$ and show $x_1 \in \widetilde{\Gamma}(x_0)$.
    Working backward in time from the target $x_1$, we define the orbit
    \begin{equation}
        \gamma_- = \{ \phitil^{a_+}_{-t}(x_1) \mid t \geq 0 \}. \label{eq:gamma-}
    \end{equation}
    Then $\gamma_-$ must have a transversal intersection with $\gamma(p_+,\phitil^{a_-})$, as orbits for fixed values of $a$ are disjoint and $\overline{\gamma_-}$ cannot be contained in $\enc \Omega_1$ by the Poincar\'{e}-Bendixson Theorem.
    There then exists $t_2>0$ such that $\phitil^{a_-}_{t_2}(p_+) \in \gamma_-$.
    By Lemma~\ref{prop:angle_condition}, the vector field $\ftil(\cdot,a_-)$ is nowhere tangent to $\gamma_- \cap \{ n > h_\star \}$ so by the Flow-Box Thoerem, there exists a neighborhood $V$ of $p_+$ and a map $\tau: V \rightarrow [0,\infty)$ such that $\phitil^{a_-}_{\tau(y)}(y) \in \gamma_-$ for all $y \in V$ (see e.g. \cite[Corollary 1.13]{dumortier2006}).
    Because $p_+$ is the attractor for $\phitil^{a_+}$ on $\overline{\enc \Omega_1}$, there exists a time $t_1>0$ such that $\phitil^{a_+}_{t_1}(x_0) \in V$ and the result follows.
    By Lemma~\ref{prop:equivalence}, if $\Omega_1 \cap E = \emptyset$ all statements hold equally for the full system.
\end{proof}

\begin{remark}
    $\enc \Omega_1$ is minimal with respect to the forward invariance property, i.e. it does not contain a proper subset forward invariant for the range $A$. In the context of random dynamical systems, such sets are known as minimal forward invariant sets; see \cite{homburg2006,homburg2010,lamb2023}.
    Here, it is a direct consequence of Lemma~\ref{prop:angle_condition}.
    $\enc \Omega_1$ is then also maximal with respect to the controllability property, i.e. it is not a proper subset of a controllable set for the range $A$ (cp. control sets \cite{ColoniusKliemann2000}). 
\end{remark}

We next consider case (2.) and find a set of saddle type enclosing a connected component ending in saddle points.
While its boundary remains a union of orbits for autonomous vector fields, for the saddle type this union is constructed using four segments.
Each segment is a subset of the one-dimensional stable or unstable invariant manifolds of the saddles.

\begin{manualtheorem}{1.2}\label{prop:Omega2}
    Let $A=[a_-,a_+]$ be hyperbolic and let $I_{\mathcal{P}_A}$ be a connected component with endpoints $(p_\pm,a_\pm) \in \Punst_A$. For $\varepsilon>0$ sufficiently small there exists a simple closed curve  
    \begin{equation*}
        \Omega_2 \subset  \Big[ \Wstab(p_\pm,\phitil^{a_\pm}) \cup \Wunst(p_\pm,\phitil^{a_\pm}) \Big],
    \end{equation*}
    such that $\enc \Omega_2$ is controllable for $\widetilde{F}$. 
    If $\Omega_2 \cap E = \emptyset$, then it is also controllable for $F$.
\end{manualtheorem}

\begin{figure}[ht]
 \begin{center}
  \includegraphics[width=1\textwidth]{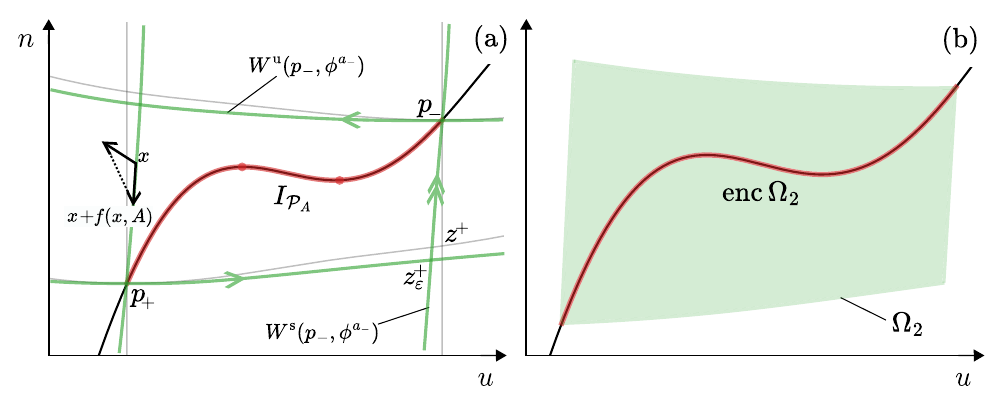}
  \caption{Example of a saddle-type set $\enc \Omega_2$ surrounding a connected component (projected onto $X$) with endpoints $(p_\pm,a_\pm) \in \Punst_A$. (a) Construction of the boundary $\Omega_2$ within the union of stable and unstable manifolds of the endpoints. The set does not have the forward invariance property as solutions can escape through the vertical boundaries given by the stable invariant manifolds where the vector fields $f(x,A)$ point either tangent to $\Omega_2$ or outside of the set $\enc \Omega_2$.
  (b) The set enclosed by $\Omega_2$ with its controllability property.  }
  \label{fig:omega2}
  \end{center}
 \end{figure}

\begin{proof}
    (i) Denote $p_\pm = (u_\pm,n_\pm)^T$. By assumption $\IA$ is transversal to the planes $\{a = a_\pm\}$ in the same direction $a'_\star(u_\pm) < 0$, so $u_+ < u_-$. 
    As in the proof of Theorem~\ref{prop:Omega1}, there exists a simple closed curve $\Omega'$ contained in the union (\ref{eq:Omegaprime}) with intersections of $H^\pm$ with $\{ u = u_\mp \}$ given by points $z^\pm$.
    We show this curve perturbs to the simple closed curve $\Omega_2$ for $\varepsilon >0$ sufficiently small again using Fenichel's Theorem.
    In the limit $\varepsilon = 0$ the forward orbit from $z^-$ connects to $p_+ \in H^+$ in $\{ u = u_+ \}$.
    The base point $p_+$ is a saddle for all values of $\varepsilon$ and $\{ u = u_+ \}$ thus perturbs to the $O(\varepsilon)$-close stable manifold $\Wstab(p_+,\phitil^{a_+})$ on $U \times N$.
    The graph $H^+$ is uniformly normally hyperbolic and again perturbs to the graph $H^+_\varepsilon$.
    Because $A$ is hyperbolic, there are no equilibria for $\phitil^{a_+}$ on the vertical strip $M^+ = (u_+,w) \times N$, where\footnote{$w<\infty$ by \ref{H-dissipative}.} $w = \inf\{ u > u_- \mid h'(u,a_+) = 0\}$.
    The intersection $M^+ \cap H^+_\varepsilon$ contains a single trajectory for $\phitil^{a_+}$, which by the same argument as in Theorem~\ref{prop:Omega1} is strictly increasing in $u$.
    It follows that $H^+_\varepsilon \cap M = \Wunst(p_+,\phitil^{a_+}) \cap M$ is the unique unstable manifold connecting $p^+$ to the adjacent stable node.
    The segment $\{ u = u_+ \} \cup H^+$ for $\Omega'$ in (\ref{eq:Omegaprime}) thus perturbs to $\Wstab(p_+,\phitil^{a_+}) \cup \Wunst (p_+,\phitil^{a_+})$.
    Using boundedness of $|h'_\star(u)|$ we can again ensure that stable and unstable manifold remain trapped on opposite sides of\footnote{This holds locally around $p_+$ by tangency of the stable and unstable manifolds to the linear stable and unstable subspaces of the linearized flow $D\phitil^{a_+}$ (see (\ref{eq:Jac})) and $h_\star'(u_+)>0$ by assumption. } $H^\star$ on $M^+$.
    The case for the saddle $p_-$ is analogous, interchanging $a_+$ and $a_-$ we can show the segment contained in $\{ u = u_- \} \cup H^-$ perturbs to $\Wstab(p_-,\phitil^{a_-}) \cup \Wunst (p_-,\phitil^{a_-})$.
    The intersections $H^\pm \cap \{ u = u_\mp \}$ are transversal at points $z^\pm$ and persist for $\varepsilon>0$ sufficiently small as transversal intersections at $O(\varepsilon)$-close points $z^\pm_\varepsilon \in \Wunst(p_\pm,\phitil^{a_\pm}) \cap \Wstab(p_\mp,\phitil^{a_\mp})$.
    The resulting curve $\Omega_2$ is depicted in Figure~\ref{fig:omega2}(a).
    \\
    \\
    (ii) The orientation of $\Omega_2$ with respect to $\phitil^{a_\pm}$ is again counter-clockwise in forward time.
    From Lemma~\ref{prop:angle_condition} for $x \in \Wunst(p_\pm,\phitil^{a_\pm})$ we still have $\inner{\ftil(x,a_\pm)^\perp}{\ftil(x,a)} \geq 0$ for all $a \in A$, such that $\ftil(x,A)$ consists of vectors either tangent to $\Omega_2$ or pointing into $\enc \Omega_2$ (cp. Theorem~\ref{prop:Omega1}). However, for $x \in \Wstab(p_\pm,\phitil^{a_\pm})$ now the opposite holds.
    To show $\enc \Omega_2$ is controllable, we fix any pair $x_0,x_1 \in \enc \Omega_2$.
    The backward orbit $\gamma_-$ (\ref{eq:gamma-}), must have a transversal intersection with $\Wunst(p_-,\phitil^{a_-})$ in a point $y_2$.
    The forward orbit $\gamma(x_0,\phitil^{a_+})$ must have a transversal intersection with $\Wstab(p_-,\phitil^{a_-})$ in a point $y_1$.
    For any neighborhood $V_2 \subset \gamma_-$ containing $y_2$ there exists a neighborhood $V_1 \subset \gamma(x_0,\phitil^{a_+})$ mapped into $V_2$ under $\phitil^{a_-}$ by continuity of the flow $\phitil^{a_-}$, passing arbitrarily close to the saddle $p_-$.
    The result then follows and for $\Omega_2 \cap E = \emptyset$ again transfers to the full system using Lemma~\ref{prop:equivalence}.
\end{proof}

\begin{remark}\label{rem:noreturn}
    For $\enc \Omega_2$ the forward invariance property is replaced by the weaker property that, once exited, a solution cannot be steered back into $\enc \Omega_2$ (no-return condition \cite{gayer2004}, see also Appendix~\ref{app:bendixson}).
    In our case, solutions may enter through unstable manifold segments and leave only through stable manifold segments of $\Omega_2$.
    We also remark that a similar construction was used previously in a chemical reaction model where $\enc \Omega_2$ can be foliated directly by the stable and unstable manifolds of the saddles along $\IA = \Punst_A$ (see \cite{ColoniusKliemann2000}, Theorem 9.1.1).
\end{remark}

Lastly, we consider the case (3.) as a Corollary of Theorems~\ref{prop:Omega1} and \ref{prop:Omega2}.
Here, both endpoints must be equilibria for either $\phitil^{a_-}$ or $\phitil^{a_+}$ and we must distinguish between two possibilities.

\begin{manualtheorem}{1.3}\label{prop:Omega3}
    Let $A=[a_-,a_+]$ be hyperbolic and let $I_{\mathcal{P}_A}$ be a connected component with endpoints $(p,a_-) \in \Pstab_A$ and $(q,a_-) \in \Punst_A$. For $\varepsilon>0$ sufficiently small there exists a simple closed curve
    \begin{equation*}
        \Omega_3 \subset \Big[ \gamma(p,\phitil^{a_+}) \cup \Wstab(q,\phitil^{a_-}) \cup \Wunst(q,\phitil^{a_-}) \Big],
    \end{equation*}
    such that $\enc \Omega_3$ is controllable for $\widetilde{F}$.
    If instead $\IA$ has endpoints $(p,a_+) \in \Pstab_A$ and $(q,a_+) \in \Punst_A$, the same holds interchanging $a_-$ and $a_+$.
    If $\Omega_3 \cap E = \emptyset$, then it is also controllable for $F$.
\end{manualtheorem}

\begin{proof}
    Denote $p=(u_1,n_1)^T$ and $q=(u_2,p_2)^T$.
    $\IA$ is transversal to $\{a = a_\pm\}$ in the opposite directions $a'_\star(u_1) > 0$ and $a'_\star(u_1) < 0$, so $u_1 < u_2$.
    We again perturb the simple closed curve $\Omega'$ in (\ref{eq:Omegaprime}) for small $\varepsilon>0$.
    Arguing as for Theorem~\ref{prop:Omega2}, the segment $\{ u = u_2\} \cup H^-$ perturbs to $\Wstab(q,\phitil^{a_-}) \cup \Wunst(q,\phitil^{a_-})$.
    From $A$ hyperbolic there exist no equilibria for the flow $\phitil^{a_+}$ on the set $M = (v,w) \times N$ where $w = \sup\{u<u_1 \mid h'(u,a_+) = 0\}$ and $w = \inf\{u>u_2 \mid h'(u,a_+)=0\}$.
    Arguing as for Theorem~\ref{prop:Omega1}, the solution $\phitil^{a_+}(p)$ exponentially tracks a monotone solution in $H^+_\varepsilon$, intersecting $\Wstab(q,\phitil^{a_-})$. 
    Controllability follows from the procedure used in the proof of Theorem~\ref{prop:Omega1}, interchanging $a_+$ and $a_-$.
\end{proof}

\begin{figure}[ht]
 \begin{center}
  \includegraphics[width=1\textwidth]{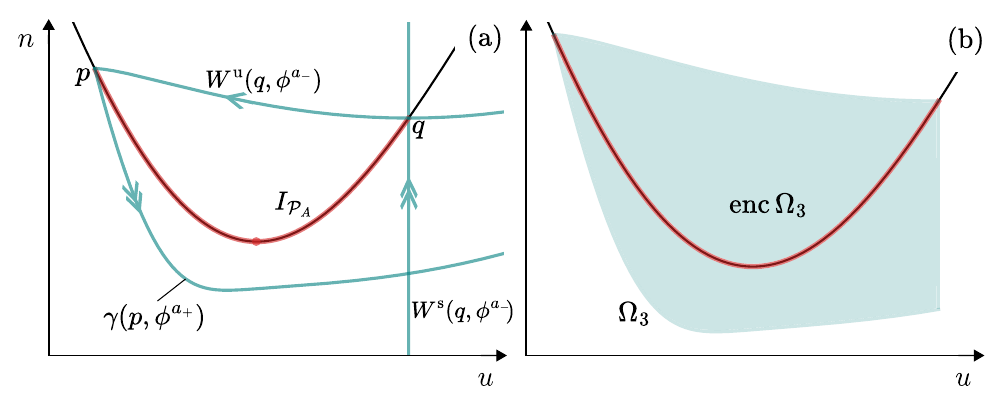}
  \caption{Example of a composition type set $\enc \Omega_3$ surrounding a connected component with endpoints $(p,a_-) \in \Pstab_A$ and $(q,a_-) \in \Punst_A$. (a) Construction of the boundary $\Omega_3$ within the union of stable and unstable manifolds of the endpoint $q$ and the forward orbit $\gamma(p,\phi^{a_+})$ leading to an adjacent stable node. The set does not have the forward invariance property as solutions can escape through the right vertical boundary. (b) The set enclosed by $\Omega_3$ with its controllability property. Sets of this type will result from a collision of a type (i) and (ii) set; see Remark~\ref{rem:bif}. }
  \label{fig:omega3}
  \end{center}
 \end{figure}

In Theorem~1 we have explicitly constructed controllable subsets.
However, the aim of containing the disease trajectory indefinitely does not generally require arbitrary, precise adjustments to the system state.
Instead of Definition~\ref{def:control}, an alternative weaker notion for a subset $K \subset \mathbb{R}^d$ would be the following:
For any initial state $x_0 \in K$ there exists an input $\alpha \in \mathcal{A}$ such that the trajectory remains confined to $K$ for all forward time.
This is known as weak forward invariance \cite{hautus1977}, or in the case where $K$ is closed, viability subsets as extensively studied by Aubin \cite{aubin1984}.
For cancer modelling, such sets were studied numerically recently as stability kernels \cite{Carrere2019} and safe sets \cite{ahmadi2024}.
Candidates for safe subsets are limited for planar differential inclusions in general, where in direct analogy to the Poincar\'{e}-Bendixson Theorem, $K$ can be disjoint from the manifold of feasible equilibria $H^\star\lvert_{a\in A}$ only if it is annular \cite{Filippov1988,hautus1977}.
In particular, for systems of the form (\ref{sys:full}), where one component evolves as the gradient of the other, no annular weakly invariant sets exist and the long-time behavior of trajectories is determined entirely by controllable sets (see Theorem~\ref{prop:limitsets}).
Solutions for any $\alpha \in \mathcal{A}$ escape from a set $K \cap \overline{\enc \Omega} = \emptyset$ in finite time.
Solutions on the complement of all controllable sets cannot self-intersect or accumulate (see Appendix~\ref{app:bendixson}).

\begin{manualtheorem}{2}\label{prop:limitsets}
    Let $A = [a_-,a_+]$ be hyperbolic and $\varepsilon > 0$ sufficiently small.
    Then a solution of (\ref{sys:full}) from $x \in X \setminus S$, for any input $\alpha \in \mathcal{A}$, limits to a set $\overline{\enc \Omega_j}$ of type $j=1,2,3$.
\end{manualtheorem}

For a precise restatement of Theorem~\ref{prop:limitsets} and its proof, see Appendix~\ref{app:bendixson}.
We conclude this Section with two Remarks on the case where $A$ is not hyperbolic and the case where $\alpha$ is not piecewise constant.

\begin{remark}\label{rem:bif}
    If we consider the parameter dependent input range $A^\delta = [a_- - \delta,a_+ + \delta]$, for $\delta \in (-\delta_0,\delta_0)$
    the number and type of controllable subsets may change with the parameter.
    For $A^0$ hyperbolic, no such change can occur for sufficiently small $\delta$ by structural stability in neighborhoods of the endpoints of $I_{\mathcal{P}_{A^0}}$ (see also Appendix~\ref{app:bendixson}). 
    Sufficiently large values of $\delta$ may result in collisions of two or more curves $\Omega_j$ (see also Figure~\ref{fig:bifurcation}).
    While the minimal forward invariance property can be destroyed in such a collision (cp. \cite{homburg2010,lamb2015}), the enclosed sets merge beyond the bifurcation point, retaining controllability.
    For more general continuity properties and collisions for controllable sets on manifolds we refer to \cite{gayer2004}.
\end{remark}

\begin{remark}\label{rem:measurable}
    Considering, instead of the class of piecewise constant inputs $\mathcal{A}$, the larger class of measurable input functions, denoted $\overline{\mathcal{A}}$, Theorem 1 remains true in at least the approximate sense. Closures of reachable sets for both input classes coincide (\cite[Lemma 2.8.2]{sontag2013}). For controllability this means it is possible to reach any open neighborhood of the target point (see also \cite{ColoniusKliemann2000}).
\end{remark}

\begin{figure}[ht]
 \begin{center}
  \includegraphics[width=1\textwidth]{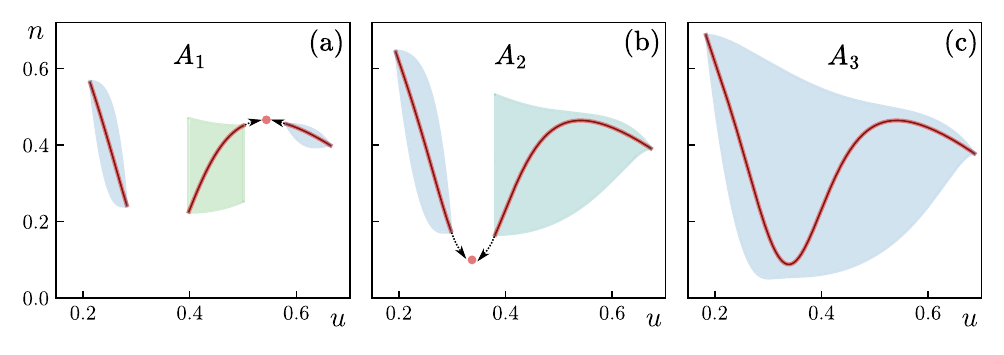}
  \caption{Example of a bifurcation for the growth landscape in example Fig~\ref{fig:potentials}(c) to illustrate Remark~\ref{rem:bif}. Three treatment ranges ordered as $A_1 \subset A_2 \subset A_3$ and the corresponding controllable sets. (a) Two node-type sets described in Theorem~\ref{prop:Omega1} and one of saddle-type described in Theorem~\ref{prop:Omega2}. (b) A collision of a node- and saddle-type set has formed a set of half-stable type (Theorem~\ref{prop:Omega3}, case (ii)). (c) A further collision returns a large node-type set, with forward invariance property restored. }
  \label{fig:bifurcation}
  \end{center}
 \end{figure}

\subsection{Examples}\label{sec:examples2}

We apply our results to the examples introduced in Section~\ref{sec:examples1}.
For examples (a) and (b) we find in both cases a single node-type controllable set $\enc \Omega_1$, globally forward attracting on $X\setminus S$ in the sense of Theorem~\ref{prop:limitsets}.
If a solution hits $\Omega_1$ in finite time, it remains trapped in $\overline{\enc \Omega_1}$ forever.
In the complement of the controllable set, trajectories for all possible inputs $\alpha$ are very limited and given by open curves. 
In example (b) this is predominant, with the patient state largely unresponsive to the drug and only a negligible controllable region (see also Figure~\ref{fig:palliative}).
In example (c) for a input range $A_1$ we find two node-type sets, separated by a saddle-type set $\enc \Omega_2$; see Figure~\ref{fig:bifurcation}.
Scaling the input range as described in Remark~\ref{rem:bif} the sets grow along the connected components and collide, merging first into a set $\enc \Omega_3$ and  eventually into a globally attracting node type set.
Collisions coincide with classical fold bifurcations points of $\phitil^{a_\pm \pm \delta}$.

\begin{figure}[ht]
 \begin{center}
  \includegraphics[width=0.96\textwidth]{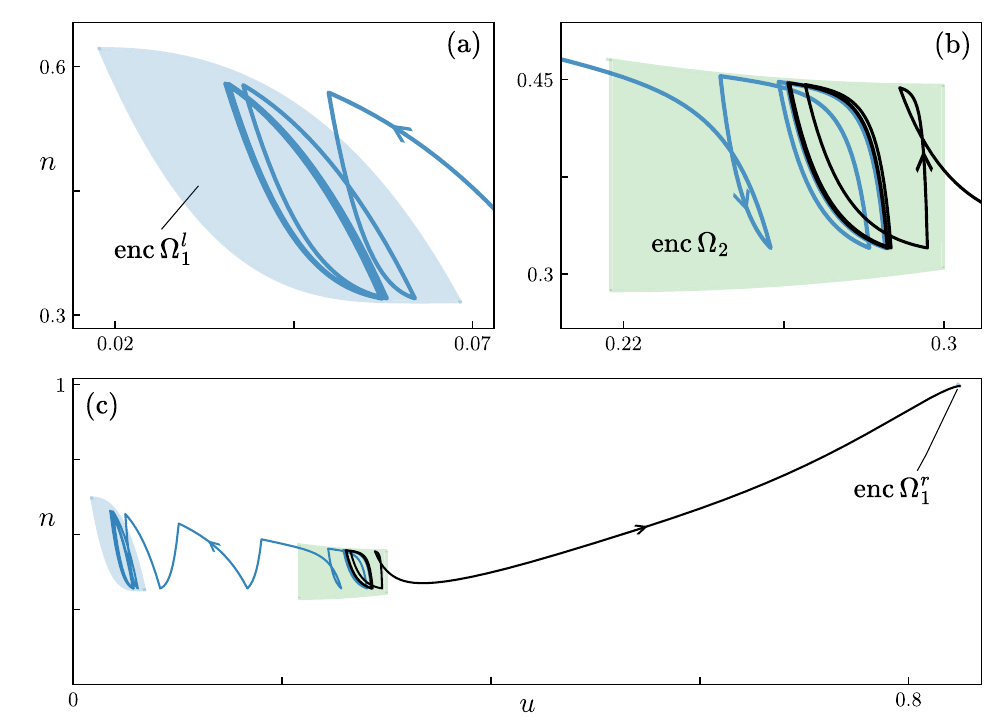}
  \caption{Optimal selections in the landscape $h(\cdot,a)$ considered in example Fig.\ref{fig:potentials}(d).
  The trajectory for initial condition $x_0 = (0.28585,0.32)$ exits right (black curve) and optimization fails, after which treatment is switched to the maximal dose $a = a_M$.  Optimal trajectory for the close-by initial condition $x_0 = (0.28587,0.32)$ instead exits left without failing, transitioning into $\enc \Omega^l_1$. }
  \label{fig:palliative}
  \end{center}
 \end{figure}

Increasing the bounds of the control range, i.e. the instantaneous applied dosage, always increases the size of the controllable sets, though the potential benefits of the forward invariance property can be lost in the process.
Example (d) captures locally the behavior discussed for cases (a) and (b); see Figure~\ref{fig:palliative}.
We have seen above that candidate subsets $K \subset X$ for indefinite containment must have nonempty intersections with controllable subsets. 
In fact, candidates might be further restricted by selecting only for those inputs that are optimal, in some sense, with respect to a performance index (called ``quality of life'') for accumulating toxic side-effects of the therapy.
As an example we consider the landscape in case (d) and choose as the performance index the $L_1$-norm of the treatment over an interval of fixed length $[0,T]$ with a periodicity constraint on the total cell number:
\begin{equation}
        \min_{\alpha \in \overline{\mathcal{A}}} \; \int_{[0,T]} \alpha(t) \, dt \; \text{ s.t. } n(0) = n(T). \label{eq:optimalcontrol}
\end{equation}
We solve the optimal control problem using a Forward-Backward Sweep method, combined with a shooting method for the constraint on the cell number; see \cite[Chapter 21]{lenhart2007}.
Successfully iterating the problem yields a periodically cycling tumor burden, optimal over each period and for which the cell number after the treatment period returns to its initial value.
For an initial condition $x_0 \in \enc \Omega_2$, the solution for the generated input will typically escape from the controllable set and transition towards the left- or right controllable set. 
These transitions are irreversible, see Remark~\ref{rem:noreturn}, and the $u$-components of the trajectories are monotone on the complement of the controllable sets in this example.
In the left transition, the solution eventually settles in $\enc \Omega^l_1$ cycling for all forward time.
A cycle here consists an interval in which the resistance increases, followed by an interval where the drug is removed to re-sensitize the cells and maintain efficacy.
Our choice of performance index does not penalize excursions of the patient trajectory into areas of large $n$, as long as it returns to the initial population periodically.
In the right transition, the objective fails and the trajectory moves towards states with high resistance, unresponsive to the further treatment. 
We know the right transition can be prevented by using controllability and instead selecting at points $x \in \enc \Omega_2$ close to the boundary only for those inputs that drive the trajectory to the left (a feedback control $\alpha_0: X \rightarrow A$).
The definition of quality of life is motivated by the specifics of the cancer model studied and many different definitions exist,  motivated often at least in part by numerical convenience \cite{ledzewicz2022}.
Instead of defining it directly as a functional to be optimized, it can also be implemented by defining an auxiliary dynamical system for the toxicity and drug concentration (see \cite{ahmadi2024}). 
A given choice might disqualify certain controllable subsets as suitable candidates for containment.
However, if for an initial value $x_0 \in X \setminus S$ an admissible input $\alpha_0 \in \overline{\mathcal{A}}$ can found, as an open-loop control $\alpha_0(t)$, feedback control $\alpha_0(x(t))$ or based on other criteria, the asymptotic behavior of the trajectory will be determined by a controllable set.
Since these must enclose subsets of the manifold of equilibria, the idea of containment by only ``sporadic'' application of treatment (cp. \cite{west2023}) resulting in a trajectory consistently traversing parts of $\enc \Omega$, should be contrasted with the idea of stabilizing a specific equilibrium $(p,a) \in \IA$ without treatment breaks (cp. \cite{cunningham2020}).

\begin{figure}[ht]
 \begin{center}
  \includegraphics[width=0.9\textwidth]{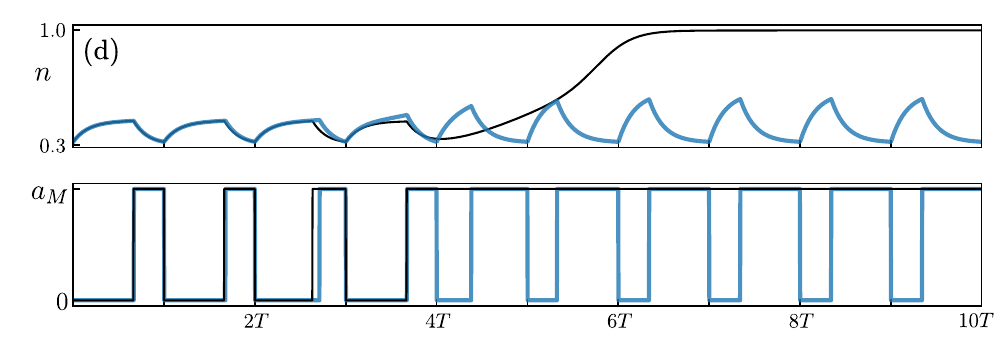}
  \caption{The population numbers $n(t)$ and the optimal selection $\alpha(t)$ in time over 10 treatment intervals of length $T$.}
  \label{fig:control}
  \end{center}
 \end{figure}

\section{Concluding remarks}\label{sec:conclusion}

In this work, we determined boundaries of (maximal) controllable sets from orbits of autonomous vector fields at extreme values of the input range.
These boundaries enclose connected components of the manifold of equilibria in the state space.
We find that they entirely determine the long-time behavior of the system.
Our proof of controllability and invariance properties relies on the constituent orbits being trapped on opposite sides of the manifold or equilibria, where the relative orientation of the family $F$ of vector fields is fixed.
The requirement that the manifolds $H^\pm$ be normally hyperbolic is mostly technical to ensure the curves are closed.
Using smallness of the rate parameter, they can be constructed first in the limit $\varepsilon=0$ and then simply perturbed to form the relevant set for $\varepsilon>0$. 
In most examples, we find that $A$ being hyperbolic is enough to ensure closedness. In that case $\varepsilon$ must still be small enough to ``untangle'' the orbits by ensuring the extremal vector fields are locally transversal to the graph $H^\star$, but this condition is typically slightly weaker.
The method can also be used when the autonomous attractors for the family $ \phi^a $ are more complex, for example including periodic orbits.
We also expect it can be extended to sets of equilbria $\mathcal{P}$ not given globally by a graph or more complex bifurcations diagrams with multiple intersecting branches. 
The significance of extreme points of the input range for the boundaries of reachable sets is well established in geometric control theory. 
The case where the image of $f(\cdot,A)$ is a linear segment, as is the case here, was studied for example in \cite{baytman1971,schattler1996}.
The case where it is instead a set with full dimension was studied more recently in the context of random dynamical systems with bounded noise \cite{homburg2010}.
Here, the input range $A \subset \mathbb{R}^2$ is the unit disc and the set of equilibria then also has full dimension.
Their methods can be used to extend our results to planar models of multi-drug therapy (see e.g. \cite{ma2021}) or to include bounded uncertainties in the system parameters. 
In this work, we considered specifically the idea of indefinite containment for initial conditions from which cure is impossible under any feasible treatment input.
Distinguishing indefinite containment involving controllable sets from ``finite but life-long'' containment on sets disjoint from controllable sets requires more precise knowledge of the involved parameter values and timescales, which we lack here.
In contrast to models for which the evolutionary rate function $k$ is a constant (e.g. \cite{Traulsen2023}), in our model class the curative set (\ref{eq:S}) always has nonempty interior, containing at least stable manifolds of normally attracting subsets of the set $E$ \eqref{eq:E}.
Determining the extent to which any time-varying treatment input yields an improvement in the ability to cure compared to agressive constant-in-time strategies is subject of future work.
We do not expect the particular features we identify for our system to translate well to systems of dimension $d >2$.
However, planar models for cancer therapy are very popular with many recent models sharing the same affine (or even Hamiltonian \cite{gluzman2020}) structure, bounded control range $A \subset \mathbb{R}$ and simple autonomous attractors (e.g. \cite{alvarez2024,greene2018,carrere2017}).

\section*{Acknowledgements}
This research was supported by the European Union’s Horizon 2020 research and innovation programme under the
Marie Skłodowska-Curie EvoGamesPlus grant number 955708.
The authors would like to thank Yannick Viossat, Sebastian Wieczorek, Joel Brown, Kate{\v{r}}ina Sta{\v{n}}kov{\'a} and Jesse Reimann for helpful discussions.

\appendix

\section{Proof of Theorem~\ref{prop:limitsets}}\label{app:bendixson}
We consider system~(\ref{sys:general}) as a differential inclusion, parameterized by $a \in A \subset \mathbb{R}$ as
\begin{equation}\label{sys:inclusion}
    \dot{x} \in \ftil(x,A).
\end{equation}
The set valued map $\ftil(\cdot,A)$ is continuous \cite{hautus1977} with compact, convex image given by a linear segment.
We denote by $\varphi(t,x_0,\alpha)$ the unique, absolutely continuous solution of (\ref{sys:inclusion}) for the initial value $x(t_0)=x_0$ always with $t_0=0$ under the input $\alpha \in \mathcal{A}$. 
We denote the set of feasible equilibria as $H^\star\rvert_{a\in A} = H^\star_A \subset U \times N$.
We use the following notion of an $\omega$-limit set for nonautonomous dynamical systems (\cite{Filippov1988}, Section 12):
\begin{equation}
    \omega_\alpha(x_0)= \{ x \in \mathbb{R}^2 \mid \text{ there exists } t_n \rightarrow \infty \text{ such that } \varphi(t_n,x_0,\alpha) \rightarrow x \}. \label{eq:omegalimit}
\end{equation}
This set need not be forward invariant in the sense that for $p \in \omega_\alpha(x_0)$ we generally have $\varphi(t,p,\alpha) \not\in \omega_\alpha(x_0)$.
Crucially, however, for the differential inclusion (\ref{sys:inclusion}) we can compensate by selecting a measurable input $\beta: [0,\infty) \rightarrow A$ such that $\varphi(t,p,\beta) \in \omega_\alpha(x_0)$ for all $t$; see \cite[Lemma 12.4]{Filippov1988}.
Using this property yields a direct generalization of the Poincar\'{e}-Bendixson Theorem (cp. e.g. \cite[Theorem 1.25]{dumortier2006}) to planar differential inclusions:
\begin{manualtheorem}{A}[\cite{Filippov1988} Theorems 13.6 and 13.7, \cite{hautus1977}]\label{prop:hautusPB}
    Suppose the forward orbit $\{ \varphi(t,x_0,\alpha) \mid t \geq 0 \}$ is bounded.
    Then
    \begin{align*}
        \text{ either } \quad (i)& \;\;  \omega_\alpha(x_0) \cap H^\star_A \neq \emptyset, \\
        \text{ or } \quad (ii)& \;\; \omega_\alpha(x_0) \text{ contains a closed orbit } L \text{ disjoint from } H^\star_A \\
                              &\text{ such that } \overline{ \enc L} \cap H^\star_A \neq \emptyset.
    \end{align*}
\end{manualtheorem}

In the following we use the Hausdorff semi-distance $$\dist(X,Y) = \sup_{x \in X} \dist(x,Y) = \sup_{x \in X} \inf_{y \in Y} |x-y|$$ for nonempty subsets $X,Y \subset \mathbb{R}^d$.
We then restate Theorem~\ref{prop:limitsets} as follows

\begin{manualtheorem}{2}
    Let $A = [a_-,a_+]$ be hyperbolic and $\varepsilon>0$ sufficiently small. Then for any $x_0 \in \mathbb{R}^2$ and any $\alpha \in \mathcal{A}$ it holds that
    \begin{equation*}
        \lim_{t \rightarrow \infty} \dist (\varphi(t,x_0,\alpha),\overline{\enc \Omega_j}) = 0,
    \end{equation*}
    for a set $\Omega_j$ of type $j=1,2,3$.
\end{manualtheorem}

\begin{proof}
    We work with slightly enlarged versions of our controllable sets, set up as follows:
    Because $A$ is hyperbolic, there exists $\delta_0 > 0$ sufficiently small such that for all $a \in (a_- - \delta_0, a_+ + \delta_0)$ the vector field $\ftil(\cdot,a)$ is structurally stable in neighborhoods of $p_\pm$.
    We then consider the system (\ref{sys:general}) with parameter-dependent input range $A^\delta = [a_--\delta,a_+ +\delta]$ for $\delta \in (-\delta_0,\delta_0)$.
    For all parameter values, endpoints of connected components $I_{\mathcal{P}_{A^\delta}}$ do not change type and Theorem~1 (adjusting $\varepsilon>0$ if necessary) gives for each component a collection simple closed curves $\Omega^\delta$ surrounding them.
    For $\delta_1<\delta_2$, we have $\Omega^{\delta_1} \cap \Omega^{\delta_2}=\emptyset$ because they are constructed as unions of orbits for different extremal autonomous vector fields $\ftil(x,a_\pm\pm \delta)$ transversal by Lemma~\ref{prop:angle_condition}.
    It follows that the enclosed sets are strictly monotone $\overline{\enc \Omega^{\delta_1}} \subset \overline{\enc \Omega^{\delta_1}}$.
    It then also follows that the set-valued map assigning $\delta \mapsto \overline{\enc \Omega^\delta}$ is upper-semicontinuous on $(-\delta_0,\delta_0)$ (see Scherbina's Lemma \cite[4.1.3]{pilyugin2006}).
    We now consider the two cases described by Theorem~\ref{prop:hautusPB}.
    \\
    \\
    (i) Let $x_0 \in \mathbb{R}^2$ and $\alpha(t) \in A^0$. 
    Assume there exists a point $p \in \omega_\alpha(x_0)\cap I$ for a connected component $I \subset H^\star_A$ contained in a set $\overline{\enc \Omega^0}$.
    For $\delta \in (0,\delta_0)$ by the above we have $p \in \enc \Omega^\delta$.
    The enclosed set is open, so for the sequence $t_n \rightarrow \infty$ there exists $n_\delta \in \mathbb{N}$ such that $\varphi(t_n,x_0,\alpha) \in \enc \Omega^\delta$ for all $n \ge n_\delta$.
    This implies $\varphi(t,x_0,\alpha) \in \overline{\enc \Omega^\delta}$ for all $t \ge t_{n_\delta}$.
    Indeed, suppose $z = \varphi(T,x_0,\alpha) \in \Omega^\delta$ at some $T>t_{n_\delta}$.
    Then we know $z \in \Wstab(q,\phitil^{a_\pm \pm \delta})$ for a saddle point $q$, all other segments of $\Omega^\delta$ being strictly inflowing for $a\in A^0$ (see Remark~\ref{rem:noreturn}).
    Thus, the solution $\varphi(t,x_0,\alpha)$ must for $t>T$ spend some time in the set between the perturbed graphs
    \begin{equation}\label{eq:Bdelta}
        B_\delta = \{ x \in U\times N \mid h_\varepsilon(u,a_+ + \delta)\le n \le h_\varepsilon(u,a_- - \delta)\},
    \end{equation}
    containing $\overline{\enc \Omega^\delta}$. One can verify, again using Lemma~\ref{prop:angle_condition}, that this set is forward invariant in the sense $\varphi(t,B_\delta,\alpha) \subset B_\delta$ for all $\delta \in [0,\delta_0)$.
    Trapped in this way, the solution cannot return to the original controllable set, which gives a contradiction.
    It follows then that $\omega_\alpha(x_0) \subset \overline{\enc \Omega^\delta}$.
    To show that $\omega_\alpha(x_0) \subset \overline{\enc \Omega^0}$ we use that
    \begin{equation*}
        \dist(\omega_\alpha(x_0),\overline{ \enc \Omega^0}) \le \dist (\overline{ \enc \Omega^\delta},\overline{ \enc \Omega^0}) \rightarrow 0 \text{ as } \delta \rightarrow 0,
    \end{equation*}
    where taking the limit we have used upper-semicontinuity in $\delta$.
    Finally, since $\varphi(t,x_0,\alpha)$ is bounded, $\omega_\alpha(x_0)$ is compact and by a standard subsequence argument we then have $\dist(\varphi(t,x_0,\alpha),\omega_\alpha(x_0)) \rightarrow 0$ as $t \rightarrow \infty$. Theorem~\ref{prop:limitsets} holds in case (i).
    \\
    \\
    (ii) Assume $\omega_\alpha(x_0) \cap H^\star_A = \emptyset$.
    Then by Theorem~\ref{prop:hautusPB} there exists a closed trajectory $L = \{\varphi(t,y,\beta) \mid t \in \mathbb{R}\}$ contained in $\omega_\alpha(x_0)$ such that $I \subset \enc L$ for at least one connected component $I \subset H^\star_A$.
    There must then exist a point $q \in L \cap (\mathbb{R}^2 \setminus B_0)$ (see \eqref{eq:Bdelta}) such that $q = \varphi(t_1,y,\beta)=\varphi(t_2,y,\beta)$.
    This requires crossing through the forward invariant set $B_0$ and gives a contradiction, so case (ii) can never be satisfied.
\end{proof}

\section{Table of parameters}\label{app:parameters}

We list here the explicit functions and parameters used in the example figures.
For the intrinsic growth function many choices are possible.
We use a multimodal Gaussian distribution across the trait-values, allowing simple adjustment to the position and slope for the extrema
\begin{equation}
    b_0(u) =  \sum_{j}  r_j \exp(-g_j (u - \overline{u}_j)^2). \label{eq:b0example}
\end{equation}
Many other forms can be used as well.
To account for the dissipativity condition \ref{H-dissipative}, we substract from (\ref{eq:b0example}) an even-power polynomial of the form $p_1 (u-p_2)^{2p_3}$ with $p_3 \in \mathbb{N}$.
For the treatment efficacy function we use sigmoids of the form
\begin{equation*}
    b_1(u) = \sum_{j} \frac{c^j_1}{c^j_2 + c^j_3 \exp(c^j_4 (c^j_5 u - c^j_6))} + c^j_7.
\end{equation*}
Other forms, such as the more simple exponentially decaying functions used in \cite{pressley2021} can be used as well.
For simplicity, we assume throughout our examples that $c(u)\equiv 1$ is a constant.

\begin{table}[ht]
    \centering
    \caption{In the main text we consider four types of growth functions (a)--(d) as examples. This table lists the parameters for these examples with the control ranges used in Figures~\ref{fig:potentials},\ref{fig:PA},\ref{fig:bifurcation} and \ref{fig:palliative}.}
    \begin{tabular}{c c c c c}
    \hline 
         & Ex.(a) & Ex.(b) & Ex.(c) & Ex.(d) \\ \hline
        $\mathbf{r}$ & $(0,0.41,0.86)$  & $(0.26,0.4,0.96)$ & $(0.5,0.7,0.35)$ & $(0.6,0.4,0.95)$\\
        $\mathbf{g}$ & $(0,1.9,2.5)$ & $(13,8.9,7.9)$ & $(18.6,9.8,8.8)$ & $(14.3,13.7,13.8)$\\
        $\overline{\mathbf{u}}$ & $(0,0.8,0)$ &$(-0.01,0.35,0.87)$  & $(0,0.25,0.68)$ & $(0,0.47,0.87)$\\
        $\mathbf{p}$ & $(0.1,0.65,3)$ & $(2.8,0.6,6)$ & $(10,0.55,5)$ & $(1,0.46,6)$\\
        $\mathbf{c}^1$ & $(1,0.9,1,0.5,1,0.3,0)$ & $(1,0.9,1,11,1,0.5,0)$ & $(-0.462,1,10.1,$ & $(1,0.9,1,6.7,$\\
        &  &  & $-1.44,10,0,0)$ & $1,0.2,1,0)$ \\
        $\mathbf{c}^2$ & -- & -- & $(-0.633,1,10.1$ & -- \\
         & & & $-1.44,10,3.7,1.1)$ & \\ \hline
        Fig.\ref{fig:potentials} & $A=[0,1.75]$ & $A=[0,0.7]$ & $A=[0,0.7]$ & $A=[0,1.3]$ \\
        Fig.\ref{fig:PA} & $A=[0.3,1.75]$ & $A=[0,0.7]$ & $A=[0.15,0.8]$ & $A=[0.1,0.45]$ \\
        Fig.\ref{fig:bifurcation} & -- & -- & $A_1 = [0.5,0.95]$ & -- \\
         &  & & $A_2 = [0.4,1.05]$ &  \\
         &  &  & $A_3 = [0.35,1.23]$ &  \\
        Fig.\ref{fig:palliative} & -- & -- & -- & $A=[0,0.38]$ \\ \hline
    \end{tabular} 
    \label{tab:my_label}
\end{table}

\bibliography{references}

\providecommand{\bysame}{\leavevmode\hbox to3em{\hrulefill}\thinspace}
\providecommand{\MR}{\relax\ifhmode\unskip\space\fi MR }
% \MRhref is called by the amsart/book/proc definition of \MR.
\providecommand{\MRhref}[2]{%
  \href{http://www.ams.org/mathscinet-getitem?mr=#1}{#2}
}
\providecommand{\href}[2]{#2}
\begin{thebibliography}{10}

\bibitem{ahmadi2024}
Zahra Ahmadi and Abolhassan Razminia, \emph{Safe optimal control of cancer
  using a control barrier function technique}, Mathematical Biosciences
  \textbf{369} (2024), 109142.

\bibitem{alexander2014}
Helen~K Alexander, Guillaume Martin, Oliver~Y Martin, and Sebastian Bonhoeffer,
  \emph{Evolutionary rescue: linking theory for conservation and medicine},
  Evolutionary Applications \textbf{7} (2014), no.~10, 1161--1179.

\bibitem{alvarez2024}
Frank~Ernesto Alvarez and Yannick Viossat, \emph{Tumor containment: a more
  general mathematical analysis}, Journal of Mathematical Biology \textbf{88}
  (2024), no.~4, 41.

\bibitem{aubin1984}
J-P Aubin and Arrigo Cellina, \emph{Differential inclusions: set-valued maps
  and viability theory}, {S}pringer (1984).

\bibitem{baytman1971}
M~Baytman, \emph{The optimal synthesis of trajectories in the plane}, Zinatne,
  Riga (USSR) (1971), (In Russian).

\bibitem{brady2019}
Renee Brady and Heiko Enderling, \emph{Mathematical models of cancer: when to
  predict novel therapies, and when not to}, Bulletin of Mathematical Biology
  \textbf{81} (2019), 3722--3731.

\bibitem{carrere2017}
Cécile Carrère, \emph{Optimization of an in vitro chemotherapy to avoid
  resistant tumours}, Journal of Theoretical Biology \textbf{413} (2017),
  24--33.

\bibitem{Carrere2019}
Cécile Carrère and Hasnaa Zidani, \emph{Stability and reachability analysis
  for a controlled heterogeneous population of cells}, Optimal Control
  Applications and Methods \textbf{41} (2020), no.~5, 1678--1704.

\bibitem{champagnat2002}
Nicolas Champagnat, R~Ferriere, and G~Ben~Arous, \emph{The canonical equation
  of adaptive dynamics: a mathematical view}, Selection \textbf{2} (2002),
  no.~1-2, 73--83.

\bibitem{coddington1955}
Earl~A Coddington and Norman Levinson, \emph{Theory of ordinary differential
  equations}, McGraw-Hill New York (1955).

\bibitem{ColoniusKliemann2000}
Fritz Colonius and Wolfgang Kliemann, \emph{The dynamics of control},
  Birkhäuser Boston (2000).

\bibitem{cunningham2020}
Jessica Cunningham, Frank Thuijsman, Ralf Peeters, Yannick Viossat, Joel Brown,
  Robert Gatenby, and Kate{\v{r}}ina Sta{\v{n}}kov{\'a}, \emph{Optimal control
  to reach eco-evolutionary stability in metastatic castrate-resistant prostate
  cancer}, Plos one \textbf{15} (2020), no.~12, e0243386.

\bibitem{dieckmann1996}
Ulf Dieckmann and Richard Law, \emph{The dynamical theory of coevolution: a
  derivation from stochastic ecological processes}, Journal of Mathematical
  Biology \textbf{34} (1996), 579--612.

\bibitem{dumortier2006}
Freddy Dumortier, Jaume Llibre, and Joan~C Art{\'e}s, \emph{Qualitative theory
  of planar differential systems}, vol.~2, Springer, 2006.

\bibitem{fenichel1979}
Neil Fenichel, \emph{Geometric singular perturbation theory for ordinary
  differential equations}, Journal of differential equations \textbf{31}
  (1979), no.~1, 53--98.

\bibitem{ferrell2012}
James~E Ferrell, \emph{Bistability, bifurcations, and waddington's epigenetic
  landscape}, Current Biology \textbf{22} (2012), no.~11, R458--R466.

\bibitem{Filippov1988}
A~F Filippov, \emph{Differential equations with discontinuous righthand sides},
  {S}pringer New York, (1988).

\bibitem{gallaher2018}
Jill~A Gallaher, Pedro~M Enriquez-Navas, Kimberly~A Luddy, Robert~A Gatenby,
  and Alexander~RA Anderson, \emph{Spatial heterogeneity and evolutionary
  dynamics modulate time to recurrence in continuous and adaptive cancer
  therapies}, Cancer Research \textbf{78} (2018), no.~8, 2127--2139.

\bibitem{gatenby2020}
Robert~A Gatenby and Joel~S Brown, \emph{Integrating evolutionary dynamics into
  cancer therapy}, Nature Reviews Clinical oncology \textbf{17} (2020), no.~11,
  675--686.

\bibitem{gayer2004}
Tobias Gayer, \emph{Control sets and their boundaries under parameter
  variation}, Journal of Differential Equations \textbf{201} (2004), no.~1,
  177--200.

\bibitem{gluzman2020}
Mark Gluzman, Jacob~G Scott, and Alexander Vladimirsky, \emph{Optimizing
  adaptive cancer therapy: dynamic programming and evolutionary game theory},
  Proceedings of the Royal Society B \textbf{287} (2020), no.~1925, 20192454.

\bibitem{greene2018}
James~M Greene, Cynthia Sanchez-Tapia, and Eduardo~D Sontag, \emph{Control
  structures of drug resistance in cancer chemotherapy}, 2018 IEEE Conference
  on Decision and Control, 2018, pp.~5195--5200.

\bibitem{hautus1977}
MLJ Hautus, Michael Heymann, and Ronald~J Stern, \emph{Rest point theorems for
  autonomous control systems}, Journal of Mathematical Analysis and
  Applications \textbf{58} (1977), no.~1, 98--112.

\bibitem{holohan2013}
Caitriona Holohan, Sandra Van~Schaeybroeck, Daniel~B Longley, and Patrick~G
  Johnston, \emph{Cancer drug resistance: an evolving paradigm}, Nature Reviews
  Cancer \textbf{13} (2013), no.~10, 714--726.

\bibitem{homburg2006}
Ale Homburg and Todd Young, \emph{Hard bifurcations in dynamical systems with
  bounded random perturbations}, Regular and Chaotic Dynamics \textbf{11}
  (2006), no.~2, 247--258.

\bibitem{homburg2010}
Ale~Jan Homburg and Todd~R Young, \emph{Bifurcations of random differential
  equations with bounded noise on surfaces}, Topological Methods in Nonlinear
  Analysis \textbf{35} (2010), no.~1, 77.

\bibitem{jones1969}
G~Stephen Jones and James~A Yorke, \emph{The existence and nonexistence of
  critical points in bounded flows}, Journal of Differential Equations
  \textbf{6} (1969), no.~2, 238--246.

\bibitem{jurdjevic1997}
Velimir Jurdjevic, \emph{Geometric control theory}, Cambridge university press,
  1997.

\bibitem{kareva2022}
Irina Kareva, \emph{Different costs of therapeutic resistance in cancer:
  Short-and long-term impact of population heterogeneity}, Mathematical
  Biosciences \textbf{352} (2022), 108891.

\bibitem{kaznatcheev2019}
Artem Kaznatcheev, Jeffrey Peacock, David Basanta, Andriy Marusyk, and Jacob~G
  Scott, \emph{Fibroblasts and alectinib switch the evolutionary games played
  by non-small cell lung cancer}, Nature Ecology \& Evolution \textbf{3}
  (2019), no.~3, 450--456.

\bibitem{lamb2015}
Jeroen Lamb, Martin Rasmussen, and Christian Rodrigues, \emph{Topological
  bifurcations of minimal invariant sets for set-valued dynamical systems},
  Proceedings of the American Mathematical Society \textbf{143} (2015), no.~9,
  3927--3937.

\bibitem{lamb2023}
Jeroen S.~W. Lamb, Martin Rasmussen, and Wei~Hao Tey, \emph{Attractors of
  linear maps with bounded noise}, 2023.

\bibitem{lande1976}
Russell Lande, \emph{Natural selection and random genetic drift in phenotypic
  evolution}, Evolution (1976), 314--334.

\bibitem{ledzewicz2022}
Urszula Ledzewicz and Heinz Sch{\"a}ttler, \emph{Pitfalls in applying optimal
  control to dynamical systems: An overview and editorial perspective},
  Discrete and Continuous Dynamical Systems-B \textbf{27} (2022), no.~11,
  6711--6722.

\bibitem{ledzewicz2006}
Urszula Ledzewicz and Heinz~M Sch{\"a}ttler, \emph{Drug resistance in cancer
  chemotherapy as an optimal control problem}, Discrete and Continuous
  Dynamical Systems Series B \textbf{6} (2006), no.~1, 129.

\bibitem{lenhart2007}
Suzanne Lenhart and John~T Workman, \emph{Optimal control applied to biological
  models}, Chapman and Hall/CRC (2007).

\bibitem{ma2021}
Y~Ma and PK~Newton, \emph{Role of synergy and antagonism in designing multidrug
  adaptive chemotherapy schedules}, Physical Review E \textbf{103} (2021),
  no.~3, 032408.

\bibitem{nikitin1994}
Sergey Nikitin, \emph{Global controllability and stabilization of nonlinear
  systems}, World Scientific (1994).

\bibitem{norton1986}
Larry Norton and Richard Simon, \emph{The norton-simon hypothesis revisited},
  Cancer Treatment Reports \textbf{70} (1986), no.~1, 163.

\bibitem{pilyugin2006}
Sergei~Yu Pilyugin, \emph{The space of dynamical systems with the c0-topology},
  Springer, 2006.

\bibitem{pressley2021}
Mariyah Pressley, Monica Salvioli, David~B. Lewis, Christina~L. Richards,
  Joel~S. Brown, and Kateřina Staňková, \emph{Evolutionary dynamics of
  treatment-induced resistance in cancer informs understanding of rapid
  evolution in natural systems}, Frontiers in Ecology and Evolution \textbf{9}
  (2021).

\bibitem{Traulsen2023}
Michael Raatz and Arne Traulsen, \emph{Promoting extinction or minimizing
  growth? the impact of treatment on trait trajectories in evolving
  populations}, Evolution \textbf{77} (2023), no.~6, 1408--1421.

\bibitem{schattler1996}
Heinz Sch{\"a}ttler, \emph{Small-time reachable sets and time-optimal feedback
  control}, Nonsmooth Analysis and Geometric Methods in Deterministic Optimal
  Control, {S}pringer Dordrecht (1996), pp.~203--225.

\bibitem{sontag2013}
Eduardo~D Sontag, \emph{Mathematical control theory: deterministic finite
  dimensional systems}, vol.~6, {S}pringer Science \& Business Media, 2013.

\bibitem{sun2007}
Yimin Sun, \emph{Necessary and sufficient condition for global controllability
  of planar affine nonlinear systems}, IEEE Transactions on Automatic Control
  \textbf{52} (2007), no.~8, 1454--1460.

\bibitem{szolnoki2003}
Dietmar Szolnoki, \emph{Set oriented methods for computing reachable sets and
  control sets}, Discrete and Continuous Dynamical Systems-B \textbf{3} (2003),
  no.~3, 361--382.

\bibitem{vincent2005book}
Thomas~L Vincent and Joel~S Brown, \emph{Evolutionary game theory, natural
  selection, and darwinian dynamics}, Cambridge University Press (2005).

\bibitem{viossat2021}
Yannick Viossat and Robert Noble, \emph{A theoretical analysis of tumour
  containment}, Nature Ecology \& Evolution \textbf{5} (2021), no.~6, 826--835.

\bibitem{waddington1957}
C~H Waddington, \emph{The strategy of the genes}, George Allen and Urwin,
  London (1957).

\bibitem{west2023}
Jeffrey West, Fred Adler, Jill Gallaher, Maximilian Strobl, Renee
  Brady-Nicholls, Joel Brown, Mark Roberson-Tessi, Eunjung Kim, Robert Noble,
  Yannick Viossat, et~al., \emph{A survey of open questions in adaptive
  therapy: Bridging mathematics and clinical translation}, Elife \textbf{12}
  (2023), e84263.

\bibitem{wolfl2022}
Benjamin W{\"o}lfl, Hedy Te~Rietmole, Monica Salvioli, Artem Kaznatcheev, Frank
  Thuijsman, Joel~S Brown, Boudewijn Burgering, and Kate{\v{r}}ina
  Sta{\v{n}}kov{\'a}, \emph{The contribution of evolutionary game theory to
  understanding and treating cancer}, Dynamic Games and Applications
  \textbf{12} (2022), no.~2, 313--342.

\bibitem{yorke1967}
James~A Yorke, \emph{Invariance for ordinary differential equations},
  Mathematical Systems theory \textbf{1} (1967), no.~4, 353--372.

\bibitem{zhang2017}
Jingsong Zhang, Jessica~J Cunningham, Joel~S Brown, and Robert~A Gatenby,
  \emph{Integrating evolutionary dynamics into treatment of metastatic
  castrate-resistant prostate cancer}, Nature communications \textbf{8} (2017),
  no.~1, 1816.

\end{thebibliography}
\bibliographystyle{amsplain} 

\end{document}